\documentclass[a4paper,dvipdfm,11pt]{amsart}
\usepackage{amsmath}
\usepackage{amssymb}
\usepackage{enumerate}
\usepackage{amscd}
\usepackage{graphics}
\usepackage{latexsym}
\usepackage{verbatim} 
\usepackage{url}
\usepackage{bbm}

\pagestyle{plain}
\theoremstyle{plain}
\newtheorem{thm}{{\bf Theorem}}[section]

\newtheorem{prop}[thm]{{\bf Proposition}}
\newtheorem{lemma}[thm]{{\bf Lemma}}
\newtheorem{fact}[thm]{{\bf Fact}}

\theoremstyle{definition}
\newtheorem{define}[thm]{{\bf Definition}}

\newtheorem{question}[thm]{{\bf Question}}

\newcommand{\cf}{\mathord{\mathrm{cf}}}

\newcommand{\dom}{\mathord{\mathrm{dom}}}
\newcommand{\size}[1]{\left\vert {#1} \right\vert}
\newcommand{\p}{\mathcal{P}}

\newcommand{\col}{\mathord{\mathrm{Col}}}

\newcommand{\seq}[1]{\langle {#1} \rangle}

\newcommand{\ka}{\kappa}
\newcommand{\la}{\lambda}
\newcommand{\om}{\omega}

\newcommand{\bbP}{\mathbb{P}}
\newcommand{\bbQ}{\mathbb{Q}}

\newcommand{\calF}{\mathcal{F}}
\newcommand{\calG}{\mathcal{G}}

\newcommand{\calU}{\mathcal{U}}

\newcommand{\HOD}{\mathrm{HOD}}

\newcommand{\ZF}{\mathsf{ZF}}
\newcommand{\ZFC}{\mathsf{ZFC}}

\newcommand{\AC}{\mathsf{AC}}
\newcommand{\fix}{\mathrm{fix}}
\newcommand{\sym}{\mathrm{sym}}
\newcommand{\HS}{\mathrm{HS}}
\newcommand{\Fn}{\mathrm{Fn}}

\title[A note on uniform ultrafilters in a choiceless context]{
A note on uniform ultrafilters in a choiceless context}
\author[T. Usuba]{Toshimichi Usuba}
\address[T. Usuba]
{Faculty of Science and Engineering,
Waseda University, 
Okubo 3-4-1, Shinjyuku, Tokyo, 169-8555 Japan}
\email{usuba@waseda.jp}
\keywords{Axiom of Choice, Symmetric extension, Uniform ultrafilter}
\subjclass[2020]{Primary 03E25, 03E35, 03E55}
\begin{document}
\maketitle
\begin{abstract}
In \cite{HK}, Hayut and Karagila asked some questions
about uniform ultrafilters in a choiceless context.
We provide several answers to their questions.
\end{abstract}
\section{Introduction}
A proper ultrafilter $U$ over an infinite cardinal $\ka$ is said to be \emph{uniform}
if every element of $U$ has cardinality $\ka$.
Let $\calU$ be the class of all infinite cardinals
$\ka$ which carries a uniform ultrafilter.
In $\ZFC$, $\calU$ has a trivial structure: It is just the class of all cardinals.
However it is not the case if the Axiom of Choice fails.
\begin{thm}[Hayut-Kagagila \cite{HK}]
Relative to a certain large cardinal assumption,
it is consistent that $\ZF+$``\,$\aleph_0, \aleph_\om \in \calU$ but $\aleph_{n+1} \notin \calU$ for every $n<\om$''.
Oppositely, it is also consistent that
$\ZF+$``\,$\aleph_\om \notin \calU$ but $\aleph_{n+1} \in \calU$ for every $n<\om$''.
\end{thm}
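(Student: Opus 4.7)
The plan is to construct both models as symmetric submodels $N$ of forcing extensions over $\ZFC$ ground models with appropriate large cardinals, adapting Jech's classical symmetric L\'evy collapse of a measurable cardinal to $\aleph_1$ (which destroys every non-principal ultrafilter on $\aleph_1$). For the first half --- ``$\aleph_0,\aleph_\om\in\calU$ but $\aleph_{n+1}\notin\calU$'' --- I would take a ground model $V\models\ZFC$ with a measurable cardinal $\la$ and a cofinal $\om$-sequence of inaccessibles (in fact measurables) $\la_0<\la_1<\cdots$ below $\la$. Force with a symmetric combination: at each $\la_n$, a L\'evy collapse $\col(\aleph_n,{<}\la_n)$ making $\la_n=\aleph_{n+1}^N$, with the usual componentwise permutation automorphism group and bounded-support symmetric filter; and at $\la$, a symmetric Prikry-style forcing changing its cofinality to $\om$ while preserving a uniform ultrafilter derived from its normal measure. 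In $N$: $\la=\aleph_\om^N$ and the surviving ultrafilter gives $\aleph_\om^N\in\calU$; at each $\aleph_{n+1}^N=\la_n$, the Jech symmetry argument rules out every hereditarily symmetric name for a uniform ultrafilter; and a non-principal ultrafilter on $\om$ is inherited from $V$.

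For the second half --- ``$\aleph_\om\notin\calU$ but $\aleph_{n+1}\in\calU$'' --- I would start instead with $V\models\ZFC$ possessing exactly an $\om$-sequence of measurables $\seq{\ka_n:n<\om}$ with no measurable cardinal at or above $\ka_\om=\sup_n\ka_n$. Force again with a symmetric product of L\'evy collapses making $\ka_n=\aleph_{n+1}^N$, but design the filter of symmetric subgroups so that each individual $V$-measure on $\ka_n$ admits a hereditarily symmetric name, yielding a uniform ultrafilter $U_n$ on $\aleph_{n+1}^N$, while the full sequence $\seq{U_n:n<\om}$ has no such name --- a deliberate failure of countable choice in $N$. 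Simultaneously arrange that $N$ contains no non-principal ultrafilter on $\om$, so the standard pushforward through the partition $\aleph_\om^N=\bigsqcup_n[\aleph_n^N,\aleph_{n+1}^N)$ is unavailable. A direct hereditarily-symmetric-name chase then shows that any purported uniform ultrafilter name on $\aleph_\om^N$ would restrict through this partition to either the forbidden sequence or a non-principal $\om$-ultrafilter, neither of which lives in $N$.

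The hardest step will be the second half, where the filter of symmetric subgroups must thread a narrow gap: fine enough that each local $V$-measure on $\ka_n$ descends to a genuine ultrafilter in $N$ (so $\aleph_{n+1}^N\in\calU$), yet coarse enough that no symmetric name can coordinate information across infinitely many factors at once (so the sequence $\seq{U_n}$ and all ultrafilters on $\aleph_\om^N$ are excluded). My first attempt would be an Easton-style product with factor-wise independent automorphism actions and bounded-support symmetric conditions: within any single factor a symmetric name can encode $U_n$, but a bounded support cannot coherently mention infinitely many factors simultaneously. The most delicate verification will be showing that \emph{no} hereditarily symmetric ultrafilter on $\aleph_\om^N$ sneaks in by some indirect route --- this requires carefully tracing how the support and automorphism orbit of any such name reproject onto the factor partition and analyzing the objects these projections induce on $\om$ and on each $\aleph_{n+1}^N$.
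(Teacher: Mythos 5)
This theorem is quoted from \cite{HK} and not proved in the paper, so your sketch has to be measured against the known mechanisms (which Sections 3--4 of this paper reproduce in closely related forms), and against those it has genuine gaps in both halves. For the first half, the setup is already incoherent as stated: a measurable $\la$ is regular in $V$, so there is no $\om$-sequence of measurables $\la_0<\la_1<\cdots$ cofinal in $\la$ to fix in advance; the cofinal sequence must be produced generically, and the collapses $\col(\aleph_n,{<}\la_n)$ must then be interleaved with the Prikry-type forcing --- a delicate construction you do not describe. More seriously, the central claim, that a uniform ultrafilter on $\la=\aleph_\om^N$ survives, is exactly the hard part and is only asserted: after ordinary Prikry forcing the normal measure does \emph{not} generate an ultrafilter (the Prikry sequence, which in your model is outright definable in $N$ as $\seq{\aleph_{n+1}^N \mid n<\om}$, together with collapse-generic sets, yields subsets of $\la$ the generated filter cannot decide), so the symmetric construction must tightly control $\p(\la)^N$, and you give no mechanism for this. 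Also, ``the Jech symmetry argument'' is a \emph{preservation} argument (it is how Jech keeps a collapsed measurable measurable); it does not rule out uniform ultrafilters on the collapsed cardinal --- on the contrary, taking the $\la_n$ measurable pushes toward $\aleph_{n+1}\in\calU$, the opposite of your goal. Killing uniform ultrafilters at the successors requires a dedicated device, e.g.\ symmetric Cohen-style generics with flip automorphisms as in Section 4, where $X\cap\pi_A(X)$ being small contradicts uniformity. (Minor: a non-principal ultrafilter on $\om$ is not ``inherited from $V$'', since the collapses add reals; but $\aleph_0\in\calU$ would follow from $\aleph_\om\in\calU$ by the pushforward argument of Lemma \ref{1.9} anyway.)

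For the second half your plan is both heavier than necessary and internally redundant or gappy, depending on how it is read. If you genuinely arrange that $N$ has no non-principal ultrafilter on $\om$, then no uniform ultrafilter on $\aleph_\om$ can exist for the trivial reason of Lemma \ref{1.9} (a uniform ultrafilter on a cardinal of countable cofinality is never $\sigma$-complete), so the entire ``forbidden sequence $\seq{U_n}$'' mechanism does no work; but this design forces $\aleph_0\notin\calU$ while $\aleph_1\in\calU$, which by Theorem \ref{1.6} already has measurable strength, and in addition keeping each $V$-measure on $\ka_n$ ultra in $N$ while collapsing the interval above $\ka_n$ (which adds new subsets of $\ka_n$) is a nontrivial Apter--Magidor-type preservation problem you do not address. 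If instead you allow ultrafilters on $\om$ and rely only on the non-existence of the sequence $\seq{U_n}$, the final step is a non sequitur: a uniform ultrafilter $W$ on $\aleph_\om$ canonically yields only a non-principal ultrafilter on $\om$; since every interval $[\aleph_n,\aleph_{n+1})$ is $W$-null, $W$ does not induce uniform ultrafilters on the $\aleph_{n+1}$'s, so ruling out the sequence rules out nothing. The known route is much lighter: no large cardinals are needed for this half at all --- the Section 4 construction (a symmetric full-support product of $\Fn(\aleph_\om,2,\mathop{<}\aleph_n)$ with flip automorphisms over a model of GCH) keeps a uniform ultrafilter on every $\aleph_n$ because $\p(\aleph_n)^N$ is computed in an intermediate $\ZFC$ extension, and kills uniform ultrafilters on $\aleph_\om$ by the direct flip/uniformity argument, proving even the stronger Theorem \ref{1.4}.
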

Furthermore, Hayut and Karagila demonstrated 
that the behavior of $\calU$ at the successors of regular cardinals 
can be manipulated as you like.
With these results, they asked the following questions for singular cardinals and its successors.

\begin{question}[\cite{HK}]
\begin{enumerate}
\item Is it consistent for $\aleph_{\om+1}$ to be the least element of $\calU$?
More generally, what behavior is consistent at successors of singular cardinals?
\item Is it consistent for a singular cardinal, and specifically $\aleph_\om$, to
be the least cardinal not in $\calU$?
\item Assume there is a uniform ultrafilter on $\aleph_{\om_\om}$, does that imply
there is a uniform ultrafilter on $\aleph_{\om}$? 
Or more generally, if $\la>\cf(\la)$ carries
a uniform ultrafilter, does that imply that any other singular cardinal with the
same cofinality carries a uniform ultrafilter?
\end{enumerate}
\end{question}

In this paper, we provide several answers to these questions
by proving  the following theorems:
\begin{thm}\label{1.3}
Relative to a certain large cardinal assumption,
it is consistent that
$\ZF+$``\,$\aleph_{\om+1}$ is the
least element of $\calU$''.
\end{thm}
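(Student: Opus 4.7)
The plan is to build a symmetric inner model $N$ of a generic extension of a model $V \models \ZFC$ with a measurable cardinal, extending the Hayut--Karagila symmetric-collapse technique. Begin with $V \models \ZFC +$ ``$\kappa$ is measurable'', and fix a normal ultrafilter $U$ on $\kappa$. Pick cardinals $\lambda_0 < \lambda_1 < \cdots < \lambda < \kappa$ in $V$ with $\sup_n \lambda_n = \lambda$, and force with
\[
\bbP \;=\; \col(\om,{<}\lambda_0) \,\times\, \prod_{n<\om}\col(\lambda_n,{<}\lambda_{n+1}) \,\times\, \bbQ,
\]
where $\bbQ$ is a suitable L\'evy collapse of the interval $[\lambda,\kappa)$ chosen so that in the symmetric submodel $\kappa$ becomes the successor of $\lambda$. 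Let $\calG$ be the product of the natural coordinate-wise permutation groups and $\calF$ the filter of subgroups generated by stabilizers of small pieces in each factor, as in \cite{HK}, and let $N$ be the hereditarily $\calF$-symmetric inner model of $V[G]$. A standard Feferman--L\'evy-style computation should then yield the cardinal structure $\aleph_{n+1}^N = \lambda_n$ for $n<\om$, $\aleph_\om^N = \lambda$, and $\aleph_{\om+1}^N = \kappa$.

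To produce a uniform ultrafilter on $\aleph_{\om+1}^N = \kappa$ in $N$, set
\[
\bar U := \{X \in \mathcal{P}(\kappa) \cap N : (\exists Y \in U)\, Y \subseteq X\}.
\]
Every such $X$ admits an $\calF$-symmetric name with support in a fragment of $\bbP$ of size $<\kappa$; combined with the $\kappa$-completeness of $U$, a L\'evy--Solovay-style argument will show that $\bar U \in N$ and that $\bar U$ is a uniform ultrafilter.

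The hard part is showing that in $N$ no $\aleph_m$ with $m \leq \om$ carries a uniform ultrafilter. For a putative uniform ultrafilter $F \in N$ on $\aleph_m^N$ with symmetric name $\dot F$ stabilized by $H \in \calF$, and any $X \in F$ with name $\dot X$, I would exploit the abundance of $H$-automorphisms of the relevant L\'evy coordinates to produce $\pi_i \in H$ ($i<\aleph_m^N$) whose translates $\pi_i(X)$ form an essentially pairwise disjoint family of cardinality $\aleph_m^N$ --- each translate forced into $F$ by $H$-invariance of $\dot F$, contradicting that $F$ is an ultrafilter. The delicate case is $m = \om$, where the singularity of $\aleph_\om^N = \lambda$ in $N$ forces one to argue block-wise across the factors $\col(\lambda_n,{<}\lambda_{n+1})$ and to coordinate the automorphism group's action uniformly across infinitely many blocks; this block-wise coordination is the principal obstacle of the proof.
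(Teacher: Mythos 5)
Your proposal has a genuine gap at exactly the point you need most: the proof that no $\aleph_m$ with $m\le\om$ (including $m=0$, since ``least element of $\calU$'' also requires that $\om$ carry no non-principal ultrafilter) is in $\calU$ in $N$. The automorphism argument you sketch --- take an arbitrary $X\in F$ with name $\dot X$ and move it by $\pi_i\in H$ to an almost disjoint family of translates all forced into $F$ --- breaks down because the sets you must worry about are precisely the ones the automorphisms cannot move. Every set of ordinals in a symmetric collapse model of this kind lies in an intermediate $\ZFC$ extension $V[G\restriction\alpha]$ (that capture is what you use to get $\bar U\in N$ to be an ultrafilter), and names of such sets, in particular check-names of ground model sets, are fixed by all the relevant automorphisms, so $\pi_i(X)=X$ and no disjointness is produced. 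Since those intermediate models satisfy $\AC$, they are full of uniform ultrafilters on each $\la_{n}$, and the real task is to show that no family of such ``rigid'' sets coheres into an ultrafilter in $N$. The known ways to do this are (a) to use a \emph{specific generic} object whose name is half-flipped by automorphisms fixing $\dot F$, showing $F$ fails to measure it (this is how Lemma \ref{3.4} and the $\aleph_\om$ lemma of Section 4 work), and (b) to \emph{reflect} $F$ into an intermediate model and refute it there; the paper's reflection (Lemmas \ref{3.3} and \ref{1.10}) essentially uses that the forcing beyond the support of $\dot F$ is \emph{countable} and that some choice ($\mathsf{DC}_{\aleph_\om}$, giving Lemma \ref{3.2}) holds in the ground model. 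In your one-step construction the forcing beyond any fixed support is enormous, so neither mechanism is available as stated, and nothing in the sketch replaces them; the difficulty you flag (block-wise coordination at $m=\om$) is secondary, since once $\om\notin\calU$ is known, $\aleph_\om\notin\calU$ follows for free from Lemma \ref{1.9} because a uniform ultrafilter on a cardinal of cofinality $\om$ cannot be $\sigma$-complete.

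For comparison, the paper decouples the two jobs: it black-boxes the Apter--Magidor theorem (from a supercompact with a measurable above it) to get a ground model of $\ZF+\mathsf{DC}_{\aleph_\om}$ in which $\aleph_{\om+1}$ is already measurable, and then performs only the countable Feferman forcing $\Fn(\om\times\om,2)$ with its flip automorphisms. Countability of the forcing makes preservation of the measure on $\aleph_{\om+1}$ trivial (Lemma \ref{3.5}), kills ultrafilters on $\om$ (hence on $\aleph_\om$), and allows any remaining uniform ultrafilter on $\aleph_n$ to be forced $\sigma$-complete and then pulled back to the ground model via Lemmas \ref{3.3} and \ref{1.10}, where $\mathsf{DC}_{\aleph_\om}$ rules it out (Lemma \ref{3.2}). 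Your plan also implicitly claims the whole configuration from a single measurable; the paper only proves a lower bound of an inner model with a measurable (Theorem \ref{1.6}), so that ambition is not absurd, but as written your construction does not substantiate it: you would in effect be producing a $\sigma$-complete uniform ultrafilter on the successor of a singular together with $\aleph_n\notin\calU$ for all $n\le\om$, and the core of that --- the impossibility arguments below $\aleph_{\om+1}$ --- is exactly what is missing.
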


\begin{thm}\label{1.4}
It is consistent that
$\ZF+$``\,$\aleph_{\om}$ is the
least cardinal not in $\calU$''.
\end{thm}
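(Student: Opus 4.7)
The plan is to realize the desired model $N$ as a symmetric submodel $V^{\HS}$ of a forcing extension of a ground model $V \models \ZFC + \mathrm{GCH}$, engineered so that (i) $\aleph_n^N=\aleph_n^V$ and $P(\aleph_n)^N = P(\aleph_n)^V$ for every $n<\om$, so that the uniform ultrafilter on $\aleph_n$ produced by Zorn's lemma in $V$ lies in $N$ and gives $\aleph_n \in \calU^N$; and (ii) the symmetric action destroys every candidate uniform ultrafilter on $\aleph_\om^N = \aleph_\om^V$ in $N$.

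For concreteness I would take $\bbP = \Fn(\aleph_\om, 2, \aleph_\om)$, the $<\!\aleph_\om$-closed Cohen-style forcing adding one new subset of $\aleph_\om$, let $\calG = \mathrm{Sym}(\aleph_\om)^V$ act on $\bbP$ through the natural permutation action on $\aleph_\om$ (so that $\calG$ also acts on $P(\aleph_\om)$), and let $\calF$ be the filter of subgroups generated by the pointwise stabilizers $\fix(E)$ for $E \subseteq \aleph_\om$ with $|E|<\aleph_\om$. Since $\bbP$ is $<\!\aleph_\om$-closed, it adds no bounded subset of $\aleph_\om$, so $P(\aleph_n)^{V[G]} = P(\aleph_n)^V$ and a fortiori $P(\aleph_n)^N = P(\aleph_n)^V$, giving (i) at once; in particular each ground-model uniform ultrafilter on $\aleph_n$, including one on $\omega=\aleph_0$, is preserved.

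For (ii), suppose $U \in N$ is a uniform ultrafilter on $\aleph_\om$ with hereditarily symmetric name $\dot U$ of support $E$, $|E|<\aleph_\om$. Partition $\aleph_\om\setminus E = B_0 \sqcup B_1$ with $|B_0|=|B_1|=\aleph_\om$ and pick $\pi \in \fix(E)$ swapping $B_0$ and $B_1$ setwise, so that $\pi\dot U = \dot U$. Since $E \notin U$ by uniformity, exactly one of $B_0, B_1$ lies in $U$; say $B_0 \in U$, witnessed by $p \in G$ with $p \Vdash \check B_0 \in \dot U$. Because $|\dom p| < \aleph_\om$ while $|B_i \setminus \dom p| = \aleph_\om$, I adjust $\pi$ within $\fix(E)$ to additionally satisfy $\pi(\dom p \setminus E) \cap \dom p = \emptyset$; then $p$ and $\pi p$ agree on $E$ and are compatible in $\bbP$, and a common extension forces $\check B_0 \in \dot U$ and simultaneously $\pi \check B_0 = \check B_1 \in \pi \dot U = \dot U$, hence $\check\emptyset \in \dot U$, a contradiction.

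The main obstacle is making this compatibility-and-swap step precise inside the symmetric forcing framework: for every relevant $p$ I need to adjust the swap $\pi$ inside $\fix(E)$ so that the support of $p$ outside $E$ is moved disjointly, while still respecting the prescribed swap of $B_0$ and $B_1$. The stratification of $\bbP$ by positions in $\aleph_\om$ makes this work for this particular choice of $\bbP$, but the argument must also be carried out with the symmetric forcing relation $\Vdash^{\HS}$ and its support calculus, using that $\calF$ is normal and that the name $\dot U$ has a concrete support $E$; this is the delicate technical content. The preservation clauses in (i) then follow from the $<\!\aleph_\om$-closure of $\bbP$ by standard means.
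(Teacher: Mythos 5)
Your plan has two fatal gaps, one in each half. First, the preservation claim (i): $\bbP=\Fn(\aleph_\om,2,\mathop{<}\aleph_\om)$ is \emph{not} ``$<\aleph_\om$-closed'' --- $\aleph_\om$ is singular, and a descending $\om$-chain of conditions of sizes $\aleph_1,\aleph_2,\dotsc$ has no lower bound, since the union of its domains has size $\aleph_\om$. Worse, this poset is not even $\sigma$-distributive: the sets $D_n=\{p \mid \size{\dom(p)}\ge \aleph_n\}$ are dense open but $\bigcap_n D_n=\emptyset$, so $\bbP$ adds new $\om$-sequences of ordinals (indeed a new element of $\prod_n \aleph_{n+1}$, by coding the restrictions of the generic function to each $\aleph_n$). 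Hence ``adds no bounded subset of $\aleph_\om$'' does not follow from anything you have said, and without it you lose $\p(\aleph_n)^N=\p(\aleph_n)^V$, the survival of the ground-model ultrafilters on each $\aleph_n$, the existence of a free ultrafilter on $\om$ in $N$ (which you need for ``$\aleph_\om$ is the \emph{least} cardinal not in $\calU$''), and even cardinal preservation. Cohen forcing at a singular cardinal is exactly the degenerate case one must avoid; this is why the paper instead forces with the full-support product $\prod_{n<\om}\Fn(\aleph_\om,2,\mathop{<}\aleph_n)$ and, rather than preserving $\p(\aleph_n)^V$, arranges (via a c.c./closure factorization under GCH) that $\p(\aleph_n)$ of the final model is computed in the intermediate $\ZFC$ model $V[G_{n+1}]\subseteq \HS^G$, where an ultrafilter exists by choice.

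Second, the killing argument (ii) is wrong at its key step. An automorphism of $\bbP$ fixes every canonical name $\check x$ of a ground-model set, so for your permutation $\pi$ one has $\pi(\check B_0)=\check B_0$, not $\check B_1$. From $p\Vdash \check B_0\in\dot U$ and $\pi(\dot U)=\dot U$ you only recover $\pi(p)\Vdash \check B_0\in\dot U$ again, and no contradiction appears; indeed no homogeneity argument can prevent an ultrafilter from deciding ground-model sets (in the paper's Theorem \ref{1.3} model a ground-model measure does generate an ultrafilter in the symmetric extension). To kill a uniform ultrafilter you must apply the symmetry to a name for a set depending on the generic: in the paper, the bit-flipping automorphism $\pi_A$ satisfies $\pi_A(p)=p$ while $\dot X_n\cap \pi_A(\dot X_n)$ is forced into a ground-model set of size $<\aleph_n$, which a uniform ultrafilter cannot contain; your permutation action gives no analogue of this, since $\pi$ merely rearranges the generic set and need not make $X\cap\pi``X$ small. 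So both halves need to be replaced, essentially by the paper's construction (or some other genuinely different forcing analyzed from scratch).
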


\begin{thm}\label{1.5}
Relative to a certain large cardinal assumption,
it is consistent that
$\ZF+$``\,$\aleph_{\om}$ is the
least cardinal not in $\calU$''$+$``every singular cardinal $>\aleph_\om$ with countable cofinality 
is in $\calU$''.
\end{thm}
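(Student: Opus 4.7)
The plan is to perform the symmetric construction of Theorem~\ref{1.4} in a \emph{bounded} way below $\aleph_\omega$, so that the resulting model inherits a $\ZFC$-like choice structure above, which is enough to build uniform ultrafilters on all larger countably-cofinal singular cardinals.

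Begin with a ground model $V\models\ZFC+\mathrm{GCH}$ equipped with the large-cardinal sequence $\seq{\kappa_n:n<\omega}$ needed for Theorem~\ref{1.4}, and set $\kappa_\omega=\sup_n\kappa_n$. The symmetric forcing $\bbP$ and automorphism group $\calG$ used there can be arranged to have hereditary rank below $\kappa_\omega$. Carrying out the construction yields a symmetric submodel $N\subseteq V[G]$ in which $\aleph_\omega^N=\kappa_\omega$, each $\aleph_n^N\in\calU^N$, $\aleph_\omega^N\notin\calU^N$, and, by the bound on $\bbP$ together with $\mathrm{GCH}$ in $V$, every $V$-cardinal $\la>\kappa_\omega$ remains a cardinal of $N$ with $\la$ and $P(\la)$ both well-orderable in $N$.

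Now let $\la$ be an $N$-singular cardinal with $\cf^N(\la)=\omega$ and $\la>\aleph_\omega^N$. In $N$, fix an increasing cofinal sequence $\seq{\la_n:n<\omega}$ of regular cardinals in $(\aleph_\omega^N,\la)$ and set $I_n=[\la_{n-1},\la_n)$ with $\la_{-1}=0$, so $|I_n|=\la_n$ and $\la=\bigcup_n I_n$ is a disjoint union. Each $P(I_n)^N$ is well-orderable in $N$, so standard $\ZFC$-style reasoning inside $N$ yields a uniform ultrafilter $U_n$ on $I_n$, with the sequence $\seq{U_n:n<\omega}$ definable in $N$ from a joint well-ordering. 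Using a uniform ultrafilter $U$ on $\omega$ (available since $\aleph_0\in\calU^N$), form the $U$-limit
\[
W=\{X\subseteq\la:\{n<\omega:X\cap I_n\in U_n\}\in U\}.
\]
One checks that $W\in N$ is an ultrafilter on $\la$; it is uniform since if $X\in W$ then $U$-many $n$ satisfy $|X\cap I_n|=\la_n$, giving $|X|=\sup_n\la_n=\la$.

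The main obstacle is to verify the locality claim: one must show that the construction of Theorem~\ref{1.4} can be placed hereditarily below $\kappa_\omega$ (so that $\bbP$, $\calG$, and the relevant hereditarily symmetric names all have rank $<\kappa_\omega$), and that this together with $\mathrm{GCH}$ in $V$ transfers well-orderings of both $\la$ and $P(\la)$ from $V[G]$ into $N$ for $\la>\kappa_\omega$. Once this locality is in place, the rest is essentially $\ZFC$ combinatorics carried out inside $N$ above $\aleph_\omega^N$.
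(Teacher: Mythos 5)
Your central ``locality claim'' is not just unproven --- it is provably false, and this sinks the whole plan. In $\ZF$, if $\p(S)$ is well-orderable then $S$ carries an ultrafilter extending any given proper filter (run the usual transfinite recursion along a well-ordering of $\p(S)$, deciding each set or its complement and taking unions at limits); applying this to the filter of subsets of $\aleph_\om$ with complement of size $<\aleph_\om$ yields a uniform ultrafilter on $\aleph_\om$. Hence in \emph{any} model in which $\aleph_\om\notin\calU$ --- in particular in the target model $N$ --- the set $\p(\la)$ cannot be well-orderable for any $\la\ge\aleph_\om$, since $\p(\aleph_\om)$ embeds into $\p(\la)$ and a subset of a well-orderable set is well-orderable. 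The same objection kills the localized version with the intervals $I_n=[\la_{n-1},\la_n)$: each $I_n$ has size $\la_n>\aleph_\om$, so well-orderability of $\p(I_n)^N$ would again well-order $\p(\aleph_\om)^N$. So there is no way to obtain the $U_n$ by ``$\ZFC$-style reasoning inside $N$,'' no matter how small the symmetric system is arranged to be; smallness of $\bbP$ controls sets of ordinals locally (every set of ordinals of the Theorem \ref{1.4} model lies in some intermediate $\ZFC$ model $V[G_n]$), but there is no uniform sequence of these models or of well-orderings in $N$, and indeed a well-ordering of $\p(\la)^N$, coded as a set of ordinals, would land in a single $V[G_n]$ and miss the later generic sets $X_m$. (A side issue: Theorem \ref{1.4} as proved in the paper uses no large-cardinal sequence $\seq{\ka_n\mid n<\om}$ at all; and Theorem \ref{1.6} shows that Theorem \ref{1.5} genuinely requires large-cardinal strength, so no variant of this ``choice above $\aleph_\om$'' shortcut can work.)

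The paper instead gets the ultrafilters on the regular cardinals above $\aleph_\om$ from the ground model rather than from choice inside $N$: starting from a strongly compact $\ka$, every regular $\la\ge\ka$ carries a $\ka$-complete uniform ultrafilter in $V$; a Hayut--Karagila symmetric collapse makes $\ka=\aleph_{\om+2}$ while these ultrafilters still generate ultrafilters; then the Section 4 symmetric forcing (of size $\aleph_{\om+1}<\ka$) kills uniform ultrafilters on $\aleph_\om$ and below $\aleph_\om$ keeps them, and $\ka$-completeness together with the smallness of the forcing ensures the ground-model ultrafilters still generate ultrafilters in $N$ (as in the argument of Lemma \ref{3.5}). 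Your final step --- taking the limit $W=\{X\subseteq\la:\{n: X\cap\la_n\in U_n\}\in U\}$ along an ultrafilter $U$ on $\om$ --- is exactly the paper's, but there the cofinal sequence $\seq{\la_n}$ and the sequence $\seq{U_n}$ are fixed in $V$, where $\AC$ holds, which is what makes the construction legitimate in $N$. The provenance of the $U_n$ is the crux, and that is precisely where your proposal fails.
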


In \cite{HK}, they also asked the following:
\begin{question}[\cite{HK}]
Is it consistent that $\ka$ does not carry a uniform ultrafilter, $\ka^+$ does,
but $\ka^+$ is not measurable, and is this possible without using large cardinals? In
particular, is it consistent that $\aleph_0$ is the only measurable cardinal, while $\aleph_1\notin \calU$
and $\aleph_2 \in \calU$?
\end{question}
While we do not have a full answer to this question,
we prove that such a situation has a large cardinal strength.
\begin{thm}[In $\ZF$]\label{1.6}
If there are cardinals $\ka<\la$ with $\ka \notin \calU$ but $\la \in \calU$,
then  there is an inner model of a measurable cardinal.
\end{thm}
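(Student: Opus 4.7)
The strategy is to produce a $\sigma$-complete non-principal ultrafilter on some uncountable cardinal in $V$, and then to invoke --- inside a suitable inner model --- the classical $\ZFC$ theorem that the least cardinal carrying such an ultrafilter is measurable.

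First I would replace $\la$ by the least cardinal in $\calU$ strictly greater than $\ka$, and fix a uniform ultrafilter $U$ on this $\la$. The basic Rudin--Keisler observation is that for any surjection $g \colon \la \to \ka$ the pushforward $g_{*}U$ is a non-principal ultrafilter on $\ka$; if it were uniform on $\ka$ we would have $\ka \in \calU$, contradicting the hypothesis. Hence $g_{*}U$ must concentrate on some $X \subseteq \ka$ with $|X| < \ka$. Iterating this pushforward reduction (taking the minimal cardinality of a partition of $\la$ into non-$U$ pieces) shows that the failure of $\ka \in \calU$ forces strong completeness properties of $U$ with respect to reductions into $\ka$.

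The crux is to upgrade $U$ to a $\sigma$-complete ultrafilter. Assume for contradiction that $U$ is not $\sigma$-complete: then $\la = \bigsqcup_{n<\om} A_n$ with each $A_n \notin U$, and the map sending $\alpha$ to the unique $n$ with $\alpha \in A_n$ pushes $U$ to a non-principal ultrafilter $F$ on $\om$. The plan is to combine $F$ with $U$, using the pushforward reductions of the first step and a surjection onto $\ka$, to construct a uniform ultrafilter on $\ka$ in $V$, contradicting $\ka \notin \calU$. Once $U$ is shown to be $\sigma$-complete, I would pass to the inner model $L[U]$, in which $\ZFC$ holds. The trace $U \cap L[U]$ is a non-principal ultrafilter on $\la$ inside $L[U]$, and its $\sigma$-completeness descends: any $L[U]$-indexed countable family of sets in $U \cap L[U]$ has its intersection computed in $L[U]$ and, by the $V$-level $\sigma$-completeness, lying in $U$. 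A Rudin--Keisler reduction inside $L[U]$ yields a $\sigma$-complete uniform ultrafilter on some $L[U]$-cardinal, which the classical $\ZFC$ theorem identifies as measurable in $L[U]$.

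The main obstacle is the $\sigma$-completeness step. Non-$\sigma$-complete uniform ultrafilters exist already in $\ZFC$, so the argument must genuinely exploit the asymmetric hypothesis $\ka \notin \calU$, $\la \in \calU$. Naive Fubini products such as $F \otimes U$ or $U \otimes F$ inherit the non-$\sigma$-completeness of both factors, so building a uniform ultrafilter on $\ka$ from $F$, $U$ and the Rudin--Keisler reductions will require a more subtle combinatorial mixing --- or, failing a direct construction, an indirect route through covering properties of a core model that rules out the non-$\sigma$-complete case.
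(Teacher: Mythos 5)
The central step of your plan---showing that a uniform ultrafilter $U$ on $\la$ can be upgraded to (or must already be) $\sigma$-complete, by deriving a contradiction with $\ka \notin \calU$ in the non-$\sigma$-complete case---cannot work. If $U$ is not $\sigma$-complete, the pushforward you describe gives a non-principal ultrafilter $F$ on $\om$, but the hypothesis says nothing about $\om$: $\om \in \calU$ is perfectly compatible with $\ka \notin \calU$, so there is no contradiction to be had, and no amount of mixing $F$ with $U$ will produce a \emph{uniform} ultrafilter on $\ka$ (your own first step shows every projection of $U$ to $\ka$ concentrates on a set of size $<\ka$, and that is all the hypothesis gives). Worse, the non-$\sigma$-complete case is not a degenerate case to be excluded but the typical one: if $\la$ has countable cofinality then no uniform ultrafilter on $\la$ is $\sigma$-complete, and the paper's own model for Theorem \ref{1.5} has $\ka=\aleph_\om \notin \calU$ while every singular $\la>\aleph_\om$ of countable cofinality is in $\calU$. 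So the hypothesis of the theorem is consistent with there being no $\sigma$-complete candidate at all, and the classical ``least cardinal carrying a $\sigma$-complete non-principal ultrafilter is measurable'' theorem is not the right engine here.

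What your first step actually proves is the right invariant: $\ka \notin \calU$ forces $U$ to be $\ka$-indecomposable (every $f:\la\to\ka$ has $f^{-1}(X)\in U$ for some $X\in[\ka]^{<\ka}$). The paper's proof runs entirely on this weaker property, via Donder's core-model theorem: a uniform ultrafilter on $\la$ that is not $(\om,\ka)$-regular for some $\ka<\la$ already yields an inner model with a measurable cardinal, and $\ka$-indecomposable implies not $(\om,\ka)$-regular. The remaining work, which your $L[U]$ sketch does not cover, is the $\ZF$ issue: Donder's theorem is a $\ZFC$ theorem, so one passes to $\HOD[U]$ (where $U'=U\cap\HOD[U]$ is a uniform ultrafilter) and must transfer indecomposability there. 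For regular $\ka$ this is immediate, but for singular $\ka$ the witnessing set $X\in[\ka]^{<\ka}$ need not lie in $\HOD[U]$; the paper handles this by Vop\v enka's theorem (making $X$ generic over $\HOD[U]$) together with forcing invariance of the Dodd--Jensen core model and its covering theorem, arguing by contradiction under the assumption that no inner model with a measurable exists. Unlike $\sigma$-completeness (which does descend to $L[U]$ as you note), indecomposability does not obviously descend to an inner model, which is exactly why this covering argument is needed.
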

This theorem also shows that
large cardinal assumptions in Theorems \ref{1.3} and \ref{1.5}
cannot be eliminated.

\section{Preliminaries}
Throughout this paper, we always suppose that every successor cardinal is regular.
First we prove basic lemmas which will be used later.
The following lemmas follow from the standard arguments,
but here we present choiceless proofs for the completeness.
Let $\bbP$ be a poset with maximum element $\mathbbm 1$.
For a set $x$, let $\check x$ be a canonical name for $x$,
namely, $\check x=\{\seq{\check y, \mathbbm 1} \mid y \in x\}$. 

\begin{lemma}[In $\ZF$]
Let $\bbP$ be a countable poset.
Then $\bbP$ preserves all cofinalities and cardinals.
\end{lemma}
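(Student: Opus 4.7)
The plan is to adapt the standard $\mathrm{ccc}$-preservation argument to $\ZF$, using that $\bbP$ is not merely $\mathrm{ccc}$ but explicitly countable: an enumeration $\bbP=\{p_n:n<\om\}$ can be fixed in $V$ outright, with no appeal to choice.

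The key step is the following observation. Let $\dot f$ be a $\bbP$-name and let $\la$ be an ordinal such that $\mathbbm{1}$ forces $\dot f$ to be a function from $\check\la$ into the ordinals. Define in $V$
\[
X=\{\ga\in\mathrm{Ord}:\exists n<\om\,\exists\beta<\la\,(p_n\Vdash\dot f(\check\beta)=\check\ga)\}.
\]
Since each condition forces $\dot f(\check\beta)$ to at most one value, $X$ is the image of a partial function from $\om\times\la$ to $\mathrm{Ord}$, so $X$ is well-orderable in $V$ with $|X|^V\leq\max(\om,\la)$. Moreover, for any $V$-generic filter $G$ and any $\beta<\la$, some $p_n\in G$ decides $\dot f(\check\beta)$ to its actual value, so $\range(\dot f^G)\subseteq X$.

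Cardinal preservation then follows by contradiction. If an uncountable $V$-cardinal $\ka$ failed to be a cardinal in $V[G]$, pick a surjection $f:\la\to\ka$ in $V[G]$ with $\la<\ka$ and apply the observation to a name $\dot f$ for $f$: this yields $X\in V$ with $\ka\subseteq X$ and $|X|^V\leq\max(\om,\la)<\ka$, contradicting $|\ka|^V=\ka$. Cofinality preservation is analogous: a cofinal $f:\la\to\alpha$ in $V[G]$ with $\la<\cf^V(\alpha)$ gives $X\in V$ cofinal in $\alpha$ of size $\leq\max(\om,\la)$, forcing $\cf^V(\alpha)\leq\max(\om,\la)$, which contradicts either the regularity of $\cf^V(\alpha)$ when it is uncountable or the fact that no finite sequence is cofinal in an ordinal of cofinality $\om$.

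I do not anticipate any substantive obstacle; the only care needed for $\ZF$ is to avoid choosing, separately for each $\beta<\la$, a condition that decides $\dot f(\check\beta)$. Fixing the enumeration of $\bbP$ once and for all at the outset sidesteps this uniformly, reducing the argument to a routine cardinality count that goes through in $\ZF$.
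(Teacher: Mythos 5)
Your proposal is correct and follows essentially the same route as the paper: both exploit that each of the countably many conditions decides at most one value of $\dot f(\check\beta)$, so the set of potential values is a well-orderable ground-model set of size at most $\max(\om,\la)$; you merely package this as a single covering observation applied in $V[G]$, where the paper runs two separate forcing-relation computations (bounding suprema for cofinalities, counting via a map on $\bbP\times\ka$ for cardinals). The only small point to tidy is that for an arbitrary $f\in V[G]$ the hypothesis that $\mathbbm{1}$ forces ``$\dot f$ is a function from $\check\la$ into the ordinals'' need not hold of the chosen name; either work below a condition of $G$ forcing this, or note that the definition of $X$ and the inclusion $\range(\dot f^G)\subseteq X$ do not require it.
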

\begin{proof}
For cofinality, it is enough to show that if $\ka$ is regular uncountable,
then $\Vdash$``$\check \ka$  is regular''.
To verify this,
take $p \in \bbP$, $\alpha<\ka$, and
a name $\dot f$ such that $p \Vdash \dot f:\check \alpha \to \check \ka$.
For each $\beta<\alpha$,
let $A_\beta=\{\xi \mid \exists q \le p (q \Vdash \dot f(\check \beta)=\check \xi)\}$.
We know $p \Vdash \dot f (\check \beta) \in \check A_\beta$,
and since $\bbP$ is countable,
we have that $A_\beta$ is countable.
Let $\gamma=\sup\{\sup A_\beta \mid \beta<\alpha\}$.
Since $\ka$ is regular uncountable,
we have $\gamma<\ka$ and  $p \Vdash \dot f``\alpha \subseteq \gamma$.
Hence $\dot f$ is not forced to be a cofinal map.

For preserving cardinals,
take cardinals $\ka<\la$, $p \in \bbP$, and a name $\dot f$ for a function from $\ka$ to $\la$.
Define $F:\bbP \times \ka \to \la$ as follows:
If $q \Vdash \dot f(\check \alpha)=\check \beta$ for some $\beta<\la$,
set $F(q, \alpha)=\beta$, here note that such $\beta$ is unique for $q$ and $\alpha$.
Otherwise, let $F(q,\alpha)=0$.
Since $\bbP$ is countable, we have $\size{\bbP \times \ka}=\ka$.
Hence $F$ cannot be a sujection and we can take $\gamma<\la$ with
$\gamma \notin \mathrm{range}(F)$.
Then we have $p \Vdash \check \gamma \notin \mathrm{range}(\dot f)$, so $\dot f$ is forced to be non-surjective.
\end{proof}

For a cardinal $\ka$, an ultrafilter $U$ is \emph{$\ka$-complete}
if for every $\alpha<\ka$ and $f:\alpha \to U$,
we have $\bigcap f``\alpha \in U$.
$U$ is \emph{$\sigma$-complete} if it is $\om_1$-complete.

\begin{lemma}[In $\ZF$]\label{1.9}
If there is a non-principal non-$\sigma$-complete ultrafilter $U$ over a set $S$,
then $\om$ carries a non-principal ultrafilter.
\end{lemma}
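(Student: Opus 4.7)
The plan is to imitate the classical ZFC proof, which is already essentially choice-free provided one is given the countable sequence of sets witnessing failure of $\sigma$-completeness explicitly (and one is, by assumption).

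First, I would unpack non-$\sigma$-completeness. By hypothesis there exists a function $f : \om \to U$ with $\bigcap_{n<\om} f(n) \notin U$. Setting $A_n = \bigcap_{m \le n} f(m)$, the sequence $\seq{A_n \mid n<\om}$ is a definable (hence choice-free) decreasing sequence of elements of $U$ whose intersection is not in $U$. Note that the whole construction is explicit, so $\AC$ is not needed to fix such a sequence.

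Next I would form the pairwise disjoint sets $B_n = A_n \setminus A_{n+1}$ and let $D = \bigcup_{n<\om} B_n$. A direct set-theoretic check shows $D = A_0 \setminus \bigcap_n A_n$, which belongs to $U$ since $A_0 \in U$ while $\bigcap_n A_n \notin U$. Define $g : D \to \om$ by $g(x) = n \iff x \in B_n$; this is well defined because the $B_n$ partition $D$. Then the pushforward
\[
V = \{ X \subseteq \om \mid g^{-1}(X) \in U \}
\]
is easily checked to be an ultrafilter on $\om$: it is closed under supersets and finite intersections since $g^{-1}$ commutes with these operations, and for every $X \subseteq \om$ exactly one of $g^{-1}(X)$ and $D \setminus g^{-1}(X) = g^{-1}(\om \setminus X)$ belongs to $U$ (here we use that $D \in U$ and that $U$ is an ultrafilter on $S$).

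Finally, I would verify non-principality. For each $n < \om$, the singleton $\{n\}$ lies in $V$ iff $B_n \in U$; but $B_n \cap A_{n+1} = \emptyset$ and $A_{n+1} \in U$, so $B_n \notin U$ and hence $\{n\} \notin V$. Thus $V$ is a non-principal ultrafilter on $\om$, as required. I do not anticipate a real obstacle here; the only point at which one must be careful in a choiceless context is not accidentally invoking $\AC$ to choose the witnessing sequence, but this sequence is provided by the hypothesis itself and then canonicalised by taking finite intersections.
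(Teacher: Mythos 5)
Your proposal is correct and is essentially the paper's own argument: both push $U$ forward along the map recording the first stage at which a point drops out of the decreasing sequence of $U$-sets, the only cosmetic difference being that the paper first shrinks the sets so that $\bigcap_n A_n=\emptyset$ and defines the map on all of $S$, while you restrict to $D=A_0\setminus\bigcap_n A_n$ and use the partition into the $B_n$. The verification of ultrafilter-hood and non-principality is the same in both cases, and no choice is needed in either.
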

\begin{proof}
Since $U$ is not $\sigma$-complete,
we can find a function $f:\om \to U$ such that
$\bigcap f``n \in U$ for every $n<\om$ but
$\bigcap f``\om =\emptyset$.
Define $g:S \to \om$ as follows:
For $s \in S$, let $g(s)$ be the least $n<\om$ with $s \notin \bigcap f``n$.
Then the family $g_*(U)=\{X \subseteq \om \mid g^{-1}(X) \in U\}$ is 
a non-principal ultrafilter over $\om$. 
\end{proof}

\begin{lemma}[In $\ZF$]\label{1.10}
Suppose the Countable Choice holds.
Let $\bbP$ be a countable poset, $\ka$ a cardinal,
and $\dot U$  a name such that
$\Vdash$``\,$\dot U$ is a $\sigma$-complete uniform ultrafilter over $\check \ka$''.
Then there is $p \in \bbP$ such that the following hold:
\begin{enumerate}
\item For every $X \subseteq \ka$, either
$p \Vdash \check X \in \dot U$ or $p \Vdash \check \ka \setminus \check X \in \dot U$.
\item The set 
$\{X \subseteq \ka \mid p \Vdash \check X \in \dot U\}$ is a $\sigma$-complete uniform ultrafilter over $\ka$.
\end{enumerate}
\end{lemma}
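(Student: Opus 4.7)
\emph{Plan.} The strategy is to argue by contradiction, exploiting the $\sigma$-completeness of $\dot U$ through a ground-model diagonal witness. Assume no $p \in \bbP$ satisfies (1); then every condition fails to decide $\check X \in \dot U$ for some $X \subseteq \ka$. Enumerate $\bbP = \{p_n : n < \om\}$, and by Countable Choice pick for each $n$ a set $X_n \subseteq \ka$ such that $p_n$ decides neither $\check X_n \in \dot U$ nor $\check \ka \setminus \check X_n \in \dot U$.

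The crucial step is to show that the natural name $\dot \chi$ for the function $\om \to 2$ sending $n$ to $1$ iff $X_n \in \dot U$ is forced into $\check V$. For each $n$, construct the name $\dot W_n$ as the Boolean mixture of $\check X_n$ and $\check \ka \setminus \check X_n$ along the Boolean value $\va \check X_n \in \dot U \va \in \ro(\bbP)$, so that $\Vdash \dot W_n \in \dot U$ and $\dot W_n^G \in \{X_n, \ka \setminus X_n\}$ in every generic extension $V[G]$. Applying $\sigma$-completeness of $\dot U^G$ to the countable family $\{\dot W_n^G : n < \om\}$ gives $\bigcap_n \dot W_n^G \in \dot U^G$, which is therefore nonempty; pick any $\alpha$ in this intersection. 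As an ordinal below $\ka$ we have $\alpha \in V$, and by construction of $\dot W_n$, $\alpha \in X_n$ if and only if $X_n \in \dot U^G$, for every $n$. Therefore $\dot \chi^G$ coincides with the $V$-function $n \mapsto [\alpha \in X_n]$, and this establishes $\Vdash \dot \chi \in \check V$.

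Consequently the set $D = \{q \in \bbP : q \Vdash \dot \chi = \check \tau \text{ for some } \tau \in V\}$ is dense in $\bbP$ and in particular nonempty; picking $q \in D$ and writing $q = p_k$ via the enumeration, we obtain $p_k \Vdash \dot \chi(\check k) = \check{\tau(k)}$, which is exactly the statement that $p_k$ decides $\check X_k \in \dot U$, contradicting the choice of $X_k$ and proving~(1). Part~(2) then follows routinely: $F_p$ is a $\sigma$-complete filter by Countable Choice combined with $\sigma$-completeness of $\dot U$, is uniform since $\bbP$ is countable and preserves cardinalities, and is an ultrafilter by~(1). I expect the main obstacle to be the clean setup of the Boolean-mixture name $\dot W_n$ and the verification of $\Vdash \dot \chi \in \check V$; once those are in place, the density of $D$ together with the enumeration of $\bbP$ delivers the contradiction immediately.
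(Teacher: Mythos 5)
Your proof is correct, and while it shares the paper's skeleton (argue by contradiction, use Countable Choice to pick a bad set $X_n$ for each condition, form the ``mixture'' names forced into $\dot U$, intersect them by the forced $\sigma$-completeness, and close with the self-referential punch line that the deciding condition is itself some $p_k$ whose bad set it decides), the middle of your argument runs differently from the paper's. The paper takes the single intersection name $\dot Y$, approximates it from below by the ground-model sets $Y_p=\{\alpha \mid p\Vdash\check\alpha\in\dot Y\}$, and applies $\sigma$-completeness together with the countability of $\bbP$ a second time to find $q\le p$ with $q\Vdash \check Y_q\in\dot U$, from which $q$ is seen to decide $\check X_q\in\dot U$. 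You instead use only that the intersection is forced nonempty: a single ordinal $\alpha$ in it is a ground-model object, and since each $\dot W_n$ is a mixture of $\check X_n$ and its complement, whether $X_n\in \dot U^G$ is read off from the $V$-fact $\alpha\in X_n$; hence the whole trace $\dot\chi$ is forced to be a ground-model element of ${}^\om 2$, and a condition deciding all of it exists. This buys a slightly more economical argument (one application of $\sigma$-completeness plus properness, rather than two), at the cost of the extra bookkeeping with $\dot\chi$. Two small presentational points to tighten for the $\ZF$ setting: phrase ``$\Vdash\dot\chi\in\check V$'' as $\Vdash\dot\chi\in\check A$ where $A=({}^\om 2)^V$ (no appeal to definability of the ground model is then needed), and note that the density of your set $D$, and the passage from ``in every generic extension'' to a forcing statement, are justified by the syntactic forcing relation (the clause for membership in a check name), since generic filters over the universe need not exist; both are routine and choice-free.
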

\begin{proof}
(2) is immediate from (1).
For (1), suppose not.
By the Countable Choice,
we can find $\{X_p \mid p \in \bbP\}$ such that $X_p \subseteq \ka$,
$p \not \Vdash \check X_p \in \dot U$, and
$p \not \Vdash \check \ka \setminus \check X_p \in \dot U$.
Take a name $ \dot Y$ such that
\[
\Vdash \dot Y=\bigcap \{ \check X_p \mid p \in \bbP, \check X_p \in \dot U\}
\cap 
\bigcap \{ \check \ka \setminus \check X_p \mid p \in \bbP, \check \ka \setminus \check X_p \in \dot U\}.
\]

Since $\bbP$ is countable, we have $\Vdash \dot Y \in \dot U$.
We also know $\Vdash$``$\dot Y \subseteq \check X_p$ or $\dot Y \cap \check X_p=\emptyset$''
for every $p \in \bbP$.

For $p \in \bbP$, let $Y_p=\{\alpha <\ka \mid p \Vdash \check \alpha \in \dot Y\}$.
We have $\Vdash \dot Y=\bigcup\{\check Y_p \mid p \in \dot G\}$ where $\dot G$ is a canonical name
for a generic filter.
Again, since $\bbP$ is countable and $\dot U$ is forced to be $\sigma$-complete,
we can find $p, q \in \bbP$ such that
$q \Vdash \check p \in \dot G \land \check Y_p \in \dot U$.
By extending $q$, we may assume $q \le p$.
Then $Y_q \supseteq Y_p$, so $q \Vdash \check Y_q \in \dot U$.
Because $q \Vdash \check Y_q \subseteq \dot Y$,
if $Y_q \cap X_q \neq \emptyset$ then $q \Vdash \check X_q \in \dot U$,
this contradicts the choice of $X_q$.
Hence $Y_q \cap X_q=\emptyset$,
but then $q \Vdash \check \ka \setminus \check X_q \in \dot U$,
this is also a contradiction.
\end{proof}

We will use \emph{symmetric extensions}.
Here we review it, and see Jech \cite{J} for details.
We emphasize that we do not need $\AC$ for taking symmetric extensions and
establishing basic results about it.

Every automorphism $\pi$ on $\bbP$
induces the isomorphism $\pi$ on the $\bbP$-names,
namely, $\pi(\dot x)=\{\seq{\pi(\dot y), \pi(p)} \mid \seq{\dot y, p } \in \dot x\}$
for a $\bbP$-name $\dot x$.
\begin{fact}
Let $p \in \bbP$, 
$\varphi$ be a formula of set theory,
and $\dot x_0,\dotsc, \dot x_n$ $\bbP$-names.
Let $\pi$ be an automorphism on $\bbP$.
Then $p \Vdash \varphi(\dot x_0,\dotsc, \dot x_n)$ if and only if
$\pi(p) \Vdash \varphi(\pi(\dot x_0),\dotsc, \pi(\dot x_n))$.
\end{fact}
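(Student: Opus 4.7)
The plan is to prove this by induction on the complexity of $\varphi$, with atomic formulas handled by a prior induction on the ranks of the names involved. The underlying observation is that $\pi$, being an order-automorphism of $\bbP$, lifts to a bijection on the class of $\bbP$-names (via the given recursive definition) that preserves name rank, and carries dense subsets below $p$ to dense subsets below $\pi(p)$.

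First I would handle the atomic cases $\dot x \in \dot y$ and $\dot x = \dot y$. Recall the recursive definition of the forcing relation: $p \Vdash \dot x \in \dot y$ iff the set $\{q \le p : \exists \seq{\dot z, r} \in \dot y \,(q \le r \text{ and } q \Vdash \dot x = \dot z)\}$ is dense below $p$, and an analogous clause defines $p \Vdash \dot x = \dot y$ in terms of the elements of $\dot x$ and $\dot y$. A straightforward simultaneous induction on $\max(\mathrm{rank}(\dot x), \mathrm{rank}(\dot y))$ transfers each clause to $\pi(p), \pi(\dot x), \pi(\dot y)$: the bijection $\seq{\dot z, r} \mapsto \seq{\pi(\dot z), \pi(r)}$ takes pairs in $\dot y$ to pairs in $\pi(\dot y)$, order is preserved under $\pi$, and the inner atomic forcing statements are covered by the inductive hypothesis because $\dot z$ has lower rank than $\dot y$.

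Once the atomic case is established, the outer induction on formula complexity is routine. Conjunction is immediate from the definition; for $\neg \varphi$, one uses that $q \le p$ iff $\pi(q) \le \pi(p)$, so the clause ``no extension of $p$ forces $\varphi$'' transports to ``no extension of $\pi(p)$ forces the $\pi$-image formula''; and for $\exists x\,\varphi(x)$, the density characterization of the existential forcing clause together with the fact that $\pi$ is a bijection of the whole class of $\bbP$-names yields the equivalence. Every step is formulated in $\ZF$, which poses no problem because all recursions run along well-founded relations (name rank, formula complexity) and require no selection of witnesses. The main, rather mild, obstacle is keeping the name-rank induction for the atomic case organized, since the clauses for $\dot x \in \dot y$ and $\dot x = \dot y$ are interdefined and must be treated together.
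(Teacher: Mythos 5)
Your proposal is correct and is the standard symmetry-lemma argument; the paper itself states this as a Fact without proof, deferring to Jech's book, and your induction on formula complexity (with the atomic cases handled by a well-founded induction on name ranks, using that $\pi$ lifts to a rank-preserving bijection of names and preserves the order and density below conditions) is exactly the canonical proof. One small organizational caution, which you already anticipated: the measure $\max(\mathrm{rank}(\dot x),\mathrm{rank}(\dot y))$ does not strictly decrease when the atomic clauses for $=$ and $\in$ call each other (only one coordinate drops), so you should run the induction on, say, the sum of the two name ranks or a suitable lexicographic pair rather than the maximum; with that adjustment the argument goes through in $\ZF$ as you describe.
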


Let $\mathcal{G}$ be a subgroup of the automorphism group on $\bbP$.
A non-empty family $\calF$ of subgroups of $\calG$ is 
a \emph{normal filter on $\calG$} if the following hold:
\begin{enumerate}
\item If $H \in \calF$ and $H'$ is a subgroup of $\calG$ with
$H \subseteq H'$, then $H' \in \calF$.
\item For $H, H' \in \calF$, we have $H \cap H' \in \calF$.
\item For every $H \in \calF$ and $\pi \in \calG$,
the set $\pi^{-1} H\pi=\{\pi^{-1} \circ \sigma \circ \pi \mid \sigma \in H\}$ is in $\calF$.
\end{enumerate}
A triple $\seq{\bbP, \calG, \calF}$ is called a \emph{symmetric system}.

For a $\bbP$-name $\dot x$,
let $\mathrm{sym}(\dot x)=\{\pi \in \calG \mid \pi(\dot x)=\dot x\}$,
which is a subgroup of $\calG$.
A name $\dot x$ is \emph{symmetric}
if $\mathrm{sym}(\dot x) \in \calF$,
and \emph{hereditarily symmetric}
if $\dot x$ is symmetric and
for every $\seq{\dot y, p} \in \dot x$,
$\dot y$ is hereditarily symmetric.
\begin{fact}
If $\dot x$ is a hereditarily symmetric name and $\pi \in \calG$,
then $\pi(\dot x)$ is also hereditarily symmetric.
\end{fact}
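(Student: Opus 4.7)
The plan is to proceed by induction on the rank of the $\bbP$-name $\dot x$. The crucial group-theoretic identity behind the proof is
\[
\mathrm{sym}(\pi(\dot x)) = \pi \circ \mathrm{sym}(\dot x) \circ \pi^{-1},
\]
which I would verify by unwinding the definition of $\mathrm{sym}$: an automorphism $\sigma \in \calG$ fixes $\pi(\dot x)$ if and only if $\pi^{-1} \circ \sigma \circ \pi$ fixes $\dot x$. With this identity in hand, the symmetry of $\pi(\dot x)$ is immediate: since $\dot x$ is symmetric, $\mathrm{sym}(\dot x) \in \calF$, and applying clause (3) of the definition of a normal filter to the group $\mathrm{sym}(\dot x)$ and the automorphism $\pi^{-1}$ yields $\pi \circ \mathrm{sym}(\dot x) \circ \pi^{-1} \in \calF$, which coincides with $\mathrm{sym}(\pi(\dot x))$ by the displayed identity.

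For hereditary symmetry I would unfold the action on names: every pair in $\pi(\dot x)$ has the form $\seq{\pi(\dot y), \pi(p)}$ for some $\seq{\dot y, p} \in \dot x$. Since $\dot x$ is hereditarily symmetric, each such $\dot y$ is hereditarily symmetric, and since $\dot y$ has strictly smaller rank than $\dot x$, the induction hypothesis applies and delivers that $\pi(\dot y)$ is hereditarily symmetric. Combined with the symmetry of $\pi(\dot x)$ established above, this completes the inductive step.

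There is no serious obstacle here; the argument is a routine induction once the conjugation identity has been isolated. The only minor subtlety I would flag is that clause (3) must be applied to $\pi^{-1}$ rather than to $\pi$, so that the resulting conjugate is $\pi \circ \mathrm{sym}(\dot x) \circ \pi^{-1}$ and not $\pi^{-1} \circ \mathrm{sym}(\dot x) \circ \pi$; only the former equals $\mathrm{sym}(\pi(\dot x))$.
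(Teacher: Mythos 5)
Your proof is correct: the conjugation identity $\mathrm{sym}(\pi(\dot x))=\pi\circ\mathrm{sym}(\dot x)\circ\pi^{-1}$, normality of $\calF$ (applied, as you note, to $\pi^{-1}$ so as to produce the right conjugate), and induction on name rank for the hereditary part constitute exactly the standard argument. The paper states this Fact without proof (deferring to Jech), and your write-up fills that gap in the expected way, so nothing further is needed.
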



Let $\mathrm{HS}$ be the class of all hereditarily symmetric names.
For a $(V, \bbP)$-generic $G$, let $\mathrm{HS}^G$ be the class  of all interpretations of
hereditarily symmetric names by $G$.
$\mathrm{HS}^G$ is a transitive model of $\ZF$ with
$V \subseteq \mathrm{HS}^G \subseteq V[G]$.
$\mathrm{HS}^G$ is called a \emph{symmetric extension of $V$}.

\section{$\aleph_{\om+1}$ can be the least elemet of $\calU$}
We give a proof of Theorem \ref{1.3}.
For this sake, we use the following Apter and Madigor's theorem.
Recall that an uncountable cardinal $\ka$ is \emph{measurable}
if $\ka$ carries a $\ka$-complete non-principal ultrafilter.
In $\ZF$, every measurable cardinal is regular.

\begin{thm}[Apter \cite{A}, Apter-Magidor \cite{AM}]
Suppose $V$ satisfies $\mathsf{GCH}+$``\,$\ka<\la$ are such that
$\ka$ is supercompact and $\la$ is the least measurable cardinal above $\ka$''.
Then there is a symmetric extension $N$ 
such that the following hold in $N$:
\begin{enumerate}
\item $\mathsf{DC}_{\aleph_\om}$ holds in $N$.
\item $\la=\aleph_{\om+1}$ is measurable in $N$.
\item The cardinal and cofinality structure  $\ge \la$ is the same as in $V$.
\end{enumerate}
\end{thm}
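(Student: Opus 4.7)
The plan is to follow the Apter--Magidor template: first make $\ka$'s supercompactness Laver-indestructible by a $\ka$-c.c., size-$\ka$ preparation; then force with a Levy-style collapse below $\la$; and finally pass to a symmetric submodel $N$ that hides the choice introduced by the low part of the collapse while preserving $\la$'s measurability. After the preparation we may assume in $V$ that $\ka$ is indestructibly supercompact, that $\la$ remains the least measurable cardinal above $\ka$ with a normal measure $U$, and that $\mathsf{GCH}$ holds at and above $\la$.

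Fix an increasing sequence $\seq{\ka_n : n<\om}$ cofinal in $\ka$ with $\ka_0 = \aleph_1$. Let $\bbP_{\mathrm{low}}$ be the finite-support product of the Levy collapses $\col(\aleph_{n+1}, <\ka_{n+1})$ for $n<\om$, let $\bbP_{\mathrm{top}} = \col(\ka,<\la)$, and set $\bbP = \bbP_{\mathrm{low}} \times \bbP_{\mathrm{top}}$. Take $\calG$ to be the group of automorphisms of $\bbP$ generated by permutations acting separately on the index set of each factor $\col(\aleph_{n+1}, <\ka_{n+1})$ and fixing $\bbP_{\mathrm{top}}$ pointwise, and let $\calF$ be the normal filter generated by the subgroups acting trivially on a cofinal tail of $\bbP_{\mathrm{low}}$ below $\ka$. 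Let $N = \HS^G$ for a $V$-generic $G \subseteq \bbP$. Clause (3) is then routine: $\bbP$ is $\la$-c.c., so every cardinal and cofinality $\ge \la$ is preserved, while the collapse structure forces $\ka = \aleph_\om^N$ and $\la = \aleph_{\om+1}^N$. Clause (1), $\mathsf{DC}_{\aleph_\om}$ in $N$, follows from the standard argument that a hereditarily symmetric name for an $\aleph_\om$-sequence of choices has all its relevant information localized on a support bounded below $\ka$; on such a bounded support the forcing is well-orderable in $V$, so $\mathsf{AC}_V$ yields the required choice function and the symmetry places it in $N$.

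The main obstacle is clause (2), the measurability of $\la = \aleph_{\om+1}$ in $N$. Let $U^*$ be the $\la$-complete ultrafilter on $\la$ in $V[G]$ obtained by lifting $U$ through the generic embedding furnished by the indestructibility of $\ka$ applied to $\bbP_{\mathrm{top}}$ with a suitable master condition; concretely, $U^*$ is the upward closure of $U$ in $V[G]$. Define in $N$
\[
\overline U = \{X \subseteq \la : X \in N \text{ and } Y \subseteq X \text{ for some } Y \in U\}.
\]
The heart of the argument is to show that $\overline U$ is a $\la$-complete uniform ultrafilter in $N$. A homogeneity argument on the symmetric product $\bbP_{\mathrm{low}}$ shows that for any hereditarily symmetric name $\dot X$ for a subset of $\la$, each question $\check \alpha \in \dot X$ is, up to the action of $\calG$, decided by $\bbP_{\mathrm{top}}$ alone; this reduces the question $X \in U^*$ to a statement about a $\bbP_{\mathrm{top}}$-name over $V$. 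Since $\bbP_{\mathrm{top}}$ is $\ka$-closed and $U$ is a normal measure, a density argument then produces $Y \in U$ with $Y \subseteq X$ or $Y \subseteq \la \setminus X$, so $\overline U$ and its complement partition $\p(\la)^N$. The $\la$-completeness and uniformity of $\overline U$ descend from those of $U^*$, with $\mathsf{DC}_{\aleph_\om}$ used to organize countable intersections inside $N$. This homogeneity reduction is where the principal technical work lies.
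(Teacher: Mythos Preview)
The paper does not prove this theorem at all; it is quoted from the literature (Apter \cite{A}, Apter--Magidor \cite{AM}) and immediately used as a black box to supply the ground model for the proof of Theorem~\ref{1.3}. There is therefore no argument in the paper against which to compare your sketch.

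As an outline of the original Apter--Magidor construction your architecture is broadly right (a symmetric collapse turning $\ka$ into $\aleph_\om$, together with $\col(\ka,\mathop{<}\la)$ making $\la=\aleph_{\om+1}$), but two points are off. First, as a product the low factors should be $\col(\ka_n,\mathop{<}\ka_{n+1})$; writing $\col(\aleph_{n+1},\mathop{<}\ka_{n+1})$ with the ground-model $\aleph_{n+1}$'s gives factors whose closure degrees do not line up with what the earlier factors collapse. Second, your justification of clause~(2) conflates two distinct mechanisms: the upward closure of $U$ is an ultrafilter in $V[G]$ simply because $\bbP$ is $\la$-c.c.\ and $U$ is $\la$-complete (a Levy--Solovay type argument), not because of any lifting of a supercompactness embedding with critical point $\ka$; indestructibility of $\ka$ says nothing about extending a measure on $\la\neq\ka$, and there is no ``master condition'' story here. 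Where the supercompactness of $\ka$ is genuinely used in Apter--Magidor is elsewhere and more delicate, and your sketch does not isolate that use.
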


We start from this Apter and Magidor's model,
that is, we work in a model $V$ of $\ZF+\mathsf{DC}_{\aleph_\om}+$
``\,$\aleph_{\om+1}$ is measurable''$+$``every successor cardinal is regular''.

\begin{lemma}\label{3.2}
For $n<\om$, there is no $\sigma$-complete uniform ultrafilter over $\aleph_n$.
\end{lemma}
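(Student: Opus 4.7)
The plan is to prove the lemma by induction on $n<\om$.  For the base case $n=0$, if $U$ were a $\sigma$-complete non-principal ultrafilter on $\om$, each cofinite set $\om\setminus\{k\}$ would belong to $U$ and $\sigma$-completeness would force $\emptyset=\bigcap_{k<\om}(\om\setminus\{k\})\in U$, an immediate contradiction.  For the inductive step with $n\ge 1$, I assume no $\sigma$-complete uniform ultrafilter exists on any $\aleph_m$ with $m<n$, suppose towards a contradiction that $U$ is one on $\aleph_n$, and split into two cases according to whether $U$ is $\aleph_n$-complete.

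If $U$ is not $\aleph_n$-complete, let $\mu$ be the least cardinality of a partition of $\aleph_n$ into pieces none of which lies in $U$.  Then $\mu>\om$ (otherwise the complements of the blocks would form a countable subfamily of $U$ with empty intersection), and minimality forces $\mu$ to be a cardinal, so $\mu=\aleph_k$ for some $1\le k<n$.  The map $f\colon\aleph_n\to\aleph_k$ sending each point to its block index pushes $U$ forward to a $\sigma$-complete ultrafilter on $\aleph_k$; this pushforward is uniform because any $A\subseteq\aleph_k$ of cardinality $<\aleph_k$ with $f^{-1}(A)\in U$ would, together with $\aleph_n\setminus f^{-1}(A)\notin U$, provide a partition of $\aleph_n$ into fewer than $\mu$ pieces outside $U$, violating the minimality of $\mu$.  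This contradicts the inductive hypothesis.

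If $U$ is $\aleph_n$-complete, I would invoke the Ulam matrix.  Using $\mathsf{AC}_{\aleph_n}$, which follows from $\mathsf{DC}_{\aleph_\om}$, I simultaneously choose an injection $f_\alpha\colon\alpha\to\aleph_{n-1}$ for each $\alpha<\aleph_n$, and set $A_{\beta,\xi}=\{\alpha:\beta<\alpha<\aleph_n,\ f_\alpha(\beta)=\xi\}$.  For each $\beta<\aleph_n$ the family $\{A_{\beta,\xi}\}_{\xi<\aleph_{n-1}}$ partitions the tail $(\beta,\aleph_n)\in U$ into $\aleph_{n-1}<\aleph_n$ pieces, so by $\aleph_n$-completeness a unique $\xi(\beta)<\aleph_{n-1}$ satisfies $A_{\beta,\xi(\beta)}\in U$.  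Since there is no injection from $\aleph_n$ into $\aleph_{n-1}$, some two distinct $\beta,\beta'$ must satisfy $\xi(\beta)=\xi(\beta')=:\xi$; the injectivity of the $f_\alpha$'s then forces $A_{\beta,\xi}\cap A_{\beta',\xi}=\emptyset$, giving $\emptyset\in U$, the desired contradiction.

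The main point to watch is the choice-theoretic budget: the Ulam step needs the simultaneous selection of $\aleph_n$ many injections, delivered by $\mathsf{DC}_{\aleph_\om}\Rightarrow\mathsf{AC}_{\aleph_n}$, and the first case needs \emph{uniformity} (not merely non-principality) of the pushforward, which is exactly what the minimality of $\mu$ is designed to produce.  The measurability of $\aleph_{\om+1}$ plays no role in this particular lemma.
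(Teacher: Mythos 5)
Your proof is correct, and its skeleton matches the paper's: a minimal counterexample (your induction on $n$), a dichotomy on whether $U$ is $\aleph_n$-complete, and a pushforward to a smaller $\aleph_k$ in the incomplete case. The two halves are implemented with different devices, though. In the complete case the paper does not use an Ulam matrix: it fixes a $1$-$1$ sequence $\seq{r_\alpha \mid \alpha<\aleph_n}$ of subsets of $\aleph_{n-1}$, chooses for each $\beta<\aleph_{n-1}$ the $U$-large side of $\{\alpha \mid \beta\in r_\alpha\}$, and intersects these $\aleph_{n-1}$ many sets; any two points of the intersection would have $r_\alpha=r_{\alpha'}$, a contradiction. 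This separating-family trick needs only an injection of $\aleph_n$ into $\p(\aleph_{n-1})$ (available in $\ZF$), whereas your Ulam-matrix step spends $\mathsf{AC}_{\aleph_n}$ on simultaneously choosing the injections $f_\alpha:\alpha\to\aleph_{n-1}$ --- legitimate here since $\mathsf{DC}_{\aleph_\om}$ supplies it, but a strictly larger choice budget. In the incomplete case the paper takes the largest $m<n$ such that $U$ is $\aleph_m$-complete and pushes $U$ forward along the least-escape function of a witnessing sequence $f:\aleph_m\to U$, while you organize the same reduction via the least cardinality $\mu$ of a partition of $\aleph_n$ into non-$U$ pieces; your minimality-of-$\mu$ argument does correctly yield uniformity of the pushforward, and $\mu$ is a well-defined aleph even in $\ZF$ because any partition of $\aleph_n$ injects into $\aleph_n$ via least elements. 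You are also right that the measurability of $\aleph_{\om+1}$ plays no role in this lemma; only $\mathsf{DC}_{\aleph_\om}$ is used.
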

\begin{proof}
Suppose not, and take the least $n<\om$
such that $\aleph_n$ carries a $\sigma$-complete uniform ultrafilter $U$.
Clearly $n>0$. First we show that
$U$ is not $\aleph_{n}$-complete.
By $\mathsf{DC}_{\aleph_\om}$,
we can take a 1-1 sequence $\seq{r_\alpha \mid \alpha<\aleph_n}$
of subsets of $\aleph_{n-1}$.
For $\beta<\aleph_{n-1}$,
define $A_\beta \in U$ as follows:
If $\{\alpha<\aleph_n \mid \beta \in r_\alpha\} \in U$,
then put $A_\beta=\{\alpha<\aleph_n \mid \beta \in r_\alpha\}$.
Otherwise, that is, 
if $\{\alpha<\aleph_n \mid \beta \notin r_\alpha\} \in U$,
then $A_\beta=\{\alpha<\aleph_n \mid \beta \notin r_\alpha\}$.
If $U$ is $\aleph_n$-complete,
we have $\bigcap_{\beta<\aleph_{n-1}} A_\beta \in U$.
Pick $\alpha, \alpha' \in \bigcap_{\beta<\aleph_{n-1}} A_\beta$ with
$\alpha<\alpha'$.
By the choice of the $A_\beta$'s, we have $r_\alpha=r_{\alpha'}$.
This is a contradiction.

Now we know $U$ is not $\aleph_{n}$-complete.
Take the largest $m<n$ such that
$U$ is $\aleph_m$-complete.
Then we can find $f:\aleph_m \to U$ such that
$\bigcap f``\alpha \in U$ for every $\alpha<\aleph_m$
but $\bigcap f``\aleph_m=\emptyset$.
Define $g:\aleph_n \to \aleph_m$ as that
$g(\beta)$ is the least $\alpha<\aleph_m$ with
$\beta \notin \bigcap f``\alpha$.
Consider the ultrafilter $g_*(U)=\{X \subseteq \aleph_m \mid g^{-1}(X) \in U\}$.
By the choice of $g$, one can check that $g_*(U)$ is a uniform ultrafilter over $\aleph_m$,
moreover it is $\sigma$-complete.
This contradicts to the minimality of $n$.
\end{proof}

For a set $X$, let $\Fn(X, 2)$ be the poset of all finite partial functions from $X$ to $2$ with
the reverse inclusion.
We define a symmetric system $\seq{\bbP, \calG, \calF}$, which is Feferman's one.

Let $\bbP$ be the poset $\Fn(\om \times \om, 2)$.
For a set $A \subseteq \om \times \om$,
let $\pi_A$ be the automorphism $\pi_A$ on $\bbP$ defined as follows:
For $p \in \bbP$, $\dom(\pi_A(p))=\dom(p)$,
and 
\[
\pi_A(p)(m,n)=\begin{cases} 1-p(m,n) & \text{ if $\seq{m,n} \in A$},\\
p(m,n) & \text{ if $\seq{m,n} \notin A$.}
\end{cases}
\]
Let $\calG$ be the set $\{\pi_A \mid A \subseteq \om \times \om\}$.
$\calG$ is a subgroup of the automorphism group of $\bbP$.
For $m<\om$,
let $\fix (m)=\{\pi_A \in \calG \mid A \cap (m \times \om)=\emptyset\}$,
$\fix(m)$ is a subgroup of $\calG$.
Let $\calF=\{H \subseteq \calG \mid H$ is a subgroup of $\calG,
\fix(m) \subseteq H$ for some $m<\om\}$.
It is routine to check that $\calF$ is a normal filter on $\calG$.

Take a $(V, \bbP)$-generic $G$.
Notice that $\bbP$ is countable,
hence $\bbP$ preserves all cofinalities and cardinals.
In particular every successor cardinal is regular in $V[G]$.
Consider a symmetric extension $\HS^G$.

The following Lemmas \ref{3.3}--\ref{3.5} are known (see \cite{HK}),
but we present proofs for the completeness. Again,
we do not need $\AC$ for proving these lemmas.

For $p \in \bbP$ and $m<\om$,
let $p \restriction m$ be the condition $p \restriction (m \times \om)$, it is in $\Fn(m \times \om,2)$.
\begin{lemma}\label{3.3}
Let $\varphi(v_0,\dotsc, v_n)$ be a formula of set theory
and $\dot x_0,\dotsc \dot x_n$ be $\bbP$-names.
Let $m<\om$, and suppose $\fix(m) \subseteq \sym(\dot x_i)$ for every $i \le n$.
If $p \Vdash \varphi(\dot x_0,\dotsc, \dot x_n)$,
then $p \restriction m \Vdash \varphi(\dot x_0,\dotsc, \dot x_n)$.
\end{lemma}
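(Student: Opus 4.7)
The plan is to argue by contradiction using the symmetry of $\bbP$ under the group $\calG$. Suppose $p \restriction m \not\Vdash \varphi(\dot x_0,\dotsc,\dot x_n)$. Then there exists $q \le p \restriction m$ with $q \Vdash \neg \varphi(\dot x_0,\dotsc,\dot x_n)$. I want to produce an automorphism $\pi \in \fix(m)$ such that $\pi(p)$ is compatible with $q$; then any common extension forces both $\varphi$ and $\neg\varphi$, yielding a contradiction.

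The construction of $\pi$ exploits that both $p$ and $q$ have finite domains. First I would set
\[
A = \bigl\{ \seq{k,\ell} \in (\dom(p) \cap \dom(q)) \setminus (m \times \om) : p(k,\ell) \neq q(k,\ell) \bigr\}.
\]
Since $A \cap (m \times \om) = \emptyset$, we have $\pi_A \in \fix(m)$. The condition $\pi_A(p)$ agrees with $p$ on $\dom(p) \cap (m \times \om)$, where $q$ already extends $p \restriction m$, so compatibility holds there. Outside $m \times \om$, by choice of $A$ the values of $\pi_A(p)$ have been flipped precisely so as to match $q$ on the overlap $\dom(p) \cap \dom(q)$. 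On $\dom(p) \setminus \dom(q)$ (and vice versa) there is no constraint. Hence $\pi_A(p) \cup q$ is a well-defined condition extending both.

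Now I invoke the hypothesis $\fix(m) \subseteq \sym(\dot x_i)$ for each $i \le n$, so $\pi_A(\dot x_i) = \dot x_i$. By the standard automorphism fact recalled just before this lemma,
\[
p \Vdash \varphi(\dot x_0,\dotsc,\dot x_n) \quad \Longleftrightarrow \quad \pi_A(p) \Vdash \varphi(\pi_A(\dot x_0),\dotsc,\pi_A(\dot x_n)),
\]
and the right-hand side simplifies to $\pi_A(p) \Vdash \varphi(\dot x_0,\dotsc,\dot x_n)$. Taking a common extension $r \le \pi_A(p), q$, we obtain $r \Vdash \varphi(\dot x_0,\dotsc,\dot x_n) \wedge \neg\varphi(\dot x_0,\dotsc,\dot x_n)$, which is the desired contradiction.

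The only delicate step is the combinatorial construction of $A$, but since everything in sight is finite and the automorphisms act only by bit-flips that can be chosen independently at each coordinate, the verification is routine. The essential content of the lemma is the observation that bit-flips outside $m \times \om$ give enough freedom to align any two conditions extending a common restriction to the first $m$ columns, while fixing hereditarily symmetric names with support $\le m$.
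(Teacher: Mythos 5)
Your proof is correct and follows essentially the same argument as the paper: define $A$ as the set of coordinates (necessarily outside $m\times\om$, since $q\le p\restriction m$) where $p$ and $q$ disagree, note $\pi_A\in\fix(m)$ fixes the names $\dot x_i$, and derive a contradiction from a common extension. The only cosmetic difference is that you apply $\pi_A$ to $p$ to make it compatible with $q$, whereas the paper applies $\pi_A$ to $q$ to make it compatible with $p$; the two are interchangeable.
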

\begin{proof}
Suppose not, and take $q \le p \restriction m$ such that
$q\Vdash \neg \varphi(\dot x_0,\dotsc, \dot x_n)$.
Let $A=\{\seq{n, i} \in \dom(q) \cap \dom(p) \mid q(n,i) \neq p(n,i)\}$.
We know $A \cap (m \times \om)=\emptyset$,
and so $\pi_A \in \fix(m)$ and $\pi_A(\dot x_k)=\dot x_k$ for $k \le n$.
Moreover $\pi_A(q)$ is compatible with $p$, but
$\pi_A(q) \Vdash \neg \varphi(\dot x_1,\dotsc, \dot x_n)$.
This is a contradiction.
\end{proof}

For $m<\om$, let $x_m=\{n <\om \mid \exists p \in G(p(m,n)=1)\}$,
and $\dot x_m$ be the name $\{\seq{\check m, p} \mid p(m,n)=1\}$.
$\dot x_m$ is a canonical hereditarily symmetric name for $x_m$,
so we have $x_m \in \HS^G$.

\begin{lemma}\label{3.4}
In $\HS^G$,
there is no non-principal ultrafilter over $\om$.
\end{lemma}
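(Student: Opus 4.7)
The plan is to argue by contradiction: suppose $U \in \HS^G$ were a non-principal ultrafilter on $\om$, and fix a hereditarily symmetric name $\dot U$ for $U$. Using the definition of the normal filter $\calF$, choose $m<\om$ with $\fix(m) \subseteq \sym(\dot U)$. Since $\dot U$ is forced to be an ultrafilter, for each $k<\om$ some condition must decide whether $\dot x_k$ or $\om\setminus\dot x_k$ belongs to $\dot U$; fix any $k \ge m$ and, after replacing $\dot x_k$ by its complement if necessary, a condition $p$ with $p \Vdash \dot x_k \in \dot U$.

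The heart of the argument is a bit-flipping symmetry. Set $A = (\{k\} \times \om) \setminus \dom(p)$. Because $k \ge m$, the set $A$ is disjoint from $m\times\om$, so $\pi_A \in \fix(m) \subseteq \sym(\dot U)$, and hence $\pi_A(\dot U) = \dot U$; because $A$ is disjoint from $\dom(p)$, we also have $\pi_A(p) = p$. By the automorphism invariance of forcing, $p \Vdash \dot x_k \in \dot U$ then gives $p \Vdash \pi_A(\dot x_k) \in \dot U$. A direct computation from the definition $\dot x_k = \{\langle \check n, q \rangle : q(k,n) = 1\}$ shows that $\pi_A(\dot x_k)$ is realized in any generic extension as the set that agrees with $x_k$ on the finite set $D = \{n : (k,n) \in \dom(p)\}$ and with $\om \setminus x_k$ off of $D$; equivalently, $\pi_A(\dot x_k)$ and $\om \setminus \dot x_k$ are forced to have symmetric difference equal to $\check D$.

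Because $\dot U$ is forced to be non-principal, it is forced to contain every cofinite subset of $\om$, so membership in $\dot U$ is invariant under finite symmetric differences. Consequently $p \Vdash \om \setminus \dot x_k \in \dot U$, which together with $p \Vdash \dot x_k \in \dot U$ contradicts the fact that $\dot U$ is forced to be a proper ultrafilter.

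The main obstacle will be the bookkeeping in computing how $\pi_A$ acts on the canonical name $\dot x_k$ and verifying that the discrepancy between $\pi_A(\dot x_k)$ and $\om \setminus \dot x_k$ really is only the finite set $D$, while keeping track of which symmetry $\pi_A$ lies in and which condition and name it fixes. Once that verification is in hand, everything else follows at once from the symmetry fact recalled before Lemma \ref{3.3} and from the non-principality of $\dot U$.
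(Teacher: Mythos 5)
Your proof is correct in substance and is essentially the paper's own argument: the same bit-flipping automorphism $\pi_A\in\fix(m)$ that fixes the deciding condition, with $\pi_A(\dot x_k)$ forced to differ from the complement of $\dot x_k$ only on a finite set, giving the same contradiction (the paper simply takes $k=m$ and flips a tail $\{m\}\times[n_0,\om)$ instead of the complement of $\dom(p)$). The one point to patch is the phrase ``$\dot U$ is forced to be a (non-principal) ultrafilter'': this is not forced by arbitrary conditions, only by suitable conditions in $G$, so you should pick your deciding condition $p$ below some $p_0\in G$ forcing the relativized statement --- or, as the paper does, take $p\in G$ with $p\Vdash\dot x_m\in\dot U$ and derive the contradiction from the actual properties of $U$ in $\HS^G$.
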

\begin{proof}
Suppose there is a non-principal ultrafilter $U$ over $\om$.
Fix a hereditarily symmetric name $\dot U$ for $U$.
Fix $m<\om$ with $\fix(m) \subseteq \sym(\dot U)$.
We prove that both $x_m$ and $\om \setminus x_m$ are not in $U$,
this is a contradiction.

Suppose $x_m \in U$.
Take $p \in G$ such that $p \Vdash \dot x_m \in \dot U$.
Fix a large $n_0<\om$ such that
$\dom(p) \cap (\{m\} \times \om )\subseteq \{m\} \times n_0$,
and let $A=\{m\} \times [n_0, \om)$.
One can check that $\pi_A(p)=p$, and 
 $\Vdash \pi_A(\dot x_m) \cap \dot x_m \subseteq \check n_0$.
Since $p \Vdash \dot x_m \in \dot U$,
we have $p \Vdash \pi_A(\dot x_m) \in \pi_A(\dot U)=\dot U$,
hence $p \Vdash \dot x_m \cap \pi_A(\dot x_m) \subseteq \check n_0 \in \dot U$.
This is a contradiction.
The case $\om \setminus x_m \in U$ follows from a similar argument.
\end{proof}

Since every uniform ultrafilter over $\aleph_\om$ is not $\sigma$-complete,
we also have:
\begin{lemma}
In $\HS^G$,
there is no uniform ultrafilter over $\aleph_\om$.
\end{lemma}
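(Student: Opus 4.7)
The plan is to derive a contradiction from Lemma \ref{3.4} via Lemma \ref{1.9}, using the hint provided by the author: any uniform ultrafilter on $\aleph_\om$ must fail to be $\sigma$-complete.

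First, I would suppose toward contradiction that in $\HS^G$ there is a uniform ultrafilter $U$ over $\aleph_\om$. Note that $U$ is automatically non-principal since every principal ultrafilter is concentrated on a singleton, whose cardinality is not $\aleph_\om$. The next step is to check that $U$ cannot be $\sigma$-complete: fix the canonical cofinal $\om$-sequence $\seq{\aleph_n \mid n<\om}$ in $\aleph_\om$ and consider the sets $B_n = \aleph_\om \setminus \aleph_n$ for $n<\om$. Since $|\aleph_n| < \aleph_\om$, uniformity of $U$ forces $\aleph_n \notin U$, and hence $B_n \in U$ for every $n<\om$. But $\bigcap_{n<\om} B_n = \emptyset \notin U$, so $U$ is not $\sigma$-complete.

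Now I would invoke Lemma \ref{1.9}: the existence of a non-principal, non-$\sigma$-complete ultrafilter on $\aleph_\om$ (which exists in $\HS^G$) yields a non-principal ultrafilter on $\om$ in $\HS^G$. This directly contradicts Lemma \ref{3.4}, which asserts that no such ultrafilter exists in $\HS^G$.

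There is no real obstacle here; the lemma is an immediate combination of Lemmas \ref{1.9} and \ref{3.4} once one observes the standard fact that cofinality $\om$ rules out $\sigma$-completeness for uniform ultrafilters. The only mild subtlety is that the argument takes place in $\HS^G$, which is a model of $\ZF$, so one must be sure that both Lemma \ref{1.9} (stated in $\ZF$) and the diagonal argument using the sequence $\seq{\aleph_n \mid n<\om}$ go through without any choice; both are immediate since $\seq{\aleph_n \mid n<\om}$ is definable and countable partitions do not require $\AC$.
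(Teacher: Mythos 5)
Your proof is correct and follows essentially the same route as the paper: show that any uniform ultrafilter on $\aleph_\om$ is non-principal and not $\sigma$-complete (using the cofinal sequence $\seq{\aleph_n \mid n<\om}$), then apply Lemma \ref{1.9} to obtain a non-principal ultrafilter on $\om$, contradicting Lemma \ref{3.4}. You simply spell out the non-$\sigma$-completeness step that the paper states without proof.
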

\begin{proof}
Otherwise, we can take a non-principal ultrafilter over $\om$ by Lemma \ref{1.9},
this contradicts to Lemma \ref{3.4}.
\end{proof}

Since $\aleph_{\om+1}$ is measurable in $V$,
we can fix an $\aleph_{\om+1}$-complete non-principal ultarfilter $U \in V$ over $\aleph_{\om+1}$,
which is a uniform ultrafilter.
Since $\bbP$ is countable and $U$ is $\sigma$-complete in $V$, one can check that:
\begin{lemma}\label{3.5}
In $V[G]$, the set $\{X \subseteq \aleph_{\om+1} \mid \exists Y \in U (Y \subseteq X)\}$
is a uniform ultrafiler over $\aleph_{\om+1}$.
In particular,
in $\mathrm{HS}^G$, $U$ generates a uniform ultrafilter over $\aleph_{\om+1}$.
\end{lemma}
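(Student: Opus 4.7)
The plan is to prove the first assertion directly; the symmetric-extension conclusion will then follow because $U \in V \subseteq \HS^G$. Concretely, given any $X \subseteq \aleph_{\om+1}$ in $V[G]$, I would exhibit some $Y \in U$ contained either in $X$ or in $\aleph_{\om+1} \setminus X$. Uniformity is automatic once this is shown, since $\bbP$ is countable and hence preserves cardinals, so each $Y \in U$ still has cardinality $\aleph_{\om+1}$ in $V[G]$.

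Fix a name $\dot X$ with $\Vdash \dot X \subseteq \check\aleph_{\om+1}$. For each $q \in \bbP$, working in $V$, I form the partition $B_q = \{\alpha < \aleph_{\om+1} \mid q \Vdash \check \alpha \in \dot X\}$, $B'_q = \{\alpha < \aleph_{\om+1} \mid q \Vdash \check \alpha \notin \dot X\}$, and $C_q = \aleph_{\om+1} \setminus (B_q \cup B'_q)$. Since $U$ is a $\sigma$-complete ultrafilter in $V$ and these three sets partition $\aleph_{\om+1} \in U$, exactly one of $B_q, B'_q, C_q$ lies in $U$. The key step is to show, still in $V$, that $D = \{q \in \bbP \mid B_q \in U \text{ or } B'_q \in U\}$ is dense. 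If $D$ were not dense, some $r \in \bbP$ would satisfy $C_q \in U$ for every $q \leq r$; as $\{q \in \bbP \mid q \leq r\}$ is countable and belongs to $V$, $\sigma$-completeness of $U$ would yield $Y := \bigcap_{q \leq r} C_q \in U$, so $Y$ is nonempty. Picking $\alpha \in Y$ and invoking density of the conditions deciding $\check\alpha \in \dot X$ below $r$ produces some $q \leq r$ with $\alpha \notin C_q$, contradicting $\alpha \in Y$.

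Once $D$ is dense, genericity supplies $q \in G \cap D$; then either $B_q \in U$ and $B_q \subseteq X$ in $V[G]$, or $B'_q \in U$ and $B'_q \subseteq \aleph_{\om+1} \setminus X$. This confirms that $\{X \subseteq \aleph_{\om+1} \mid \exists Y \in U\,(Y \subseteq X)\}$ is a uniform ultrafilter in $V[G]$, and the symmetric-extension clause follows by applying the same definition inside $\HS^G$, using $U \in V \subseteq \HS^G$. The one subtle point, where I would concentrate attention, is the $\sigma$-completeness step: it is important to take the intersection over the $V$-indexed family $\{C_q \mid q \leq r\}$ rather than the tempting but riskier $V[G]$-indexed family $\{C_q \mid q \in G\}$, since the latter need not be a $V$-family and one could not directly invoke $\sigma$-completeness of $U$ in $V$ on it.
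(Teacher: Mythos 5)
Your proof is correct and takes essentially the same approach as the paper: both decide $\check\alpha \in \dot X$ condition-by-condition and apply $\sigma$-completeness of $U$ in $V$ to a countable, $V$-indexed family of sets attached to conditions of the countable poset $\bbP$. The paper just organizes it slightly differently, fixing $p$ and, when no $Y_q=\{\alpha \mid q \Vdash \check\alpha \in \dot X\}$ (for $q\le p$) lies in $U$, taking $Y=\bigcap_{q\le p}(\aleph_{\om+1}\setminus Y_q)\in U$, which $p$ forces to be disjoint from $\dot X$, in place of your three-way partition and density-of-decision argument.
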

\begin{proof}
It is enough to check that for every $X \subseteq \aleph_{\om+1}$,
there is $Y \in U$ such that $Y \subseteq X$ or $X \cap Y=\emptyset$.
Take $p \in \bbP$ and a name $\dot X$ for a subset of $\aleph_{\om+1}$.
For $q \le p$, let $Y_q=\{\alpha<\aleph_{\om+1} \mid q \Vdash \check \alpha \in \dot X\}$.
If $Y_q \in U$ for some $q \le p$,
then $q \Vdash \check Y_q \subseteq \dot X$.
If $Y_q \notin U$ for every $q \le p$,
let $Y=\bigcap_{q \le p}( \aleph_{\om+1} \setminus Y_q)$.
We have $Y \in U$ since $U$ is 
$\sigma$-complete and $\bbP$ is countable.
In addition we have $p \Vdash \check Y \cap \dot X=\emptyset$.
\end{proof}

For $m<\om$, let $G_m =G \cap \Fn(m \times \om, 2)$.
$G_m$ is $(V, \Fn(m \times \om, 2))$-generic.
The name $\{\seq{(p \restriction m\check ), p} \mid  p\in \bbP\}$ is a canonical hereditarily 
symmetric name for $G_m$.
Hence we have:
\begin{lemma}
$V[G_m] \subseteq \HS^G$ for every $m<\om$.
\end{lemma}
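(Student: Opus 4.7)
The plan is to verify directly that the specified name $\dot G_m := \{\langle (p\restriction m)\check{\ }, p\rangle \mid p \in \bbP\}$ is both a name for $G_m$ and hereditarily symmetric; once $G_m \in \HS^G$ is established, $V[G_m] \subseteq \HS^G$ follows immediately from $V \subseteq \HS^G$ and the definability of $V[G_m]$ from $V$ and $G_m$.

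First I would unpack the interpretation: $(\dot G_m)^G = \{p \restriction m \mid p \in G\}$, which is exactly $G \cap \Fn(m \times \om, 2) = G_m$ since $G$ is a filter. Next I would check hereditary symmetry. Each element of $\dot G_m$ has first coordinate of the form $(p\restriction m)\check{\ }$, a check name for a ground model set, and check names are hereditarily symmetric because every automorphism in $\calG$ fixes them pointwise. So the only nontrivial point is that $\sym(\dot G_m) \in \calF$, for which it suffices to verify $\fix(m) \subseteq \sym(\dot G_m)$.

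For that verification I would take $\pi_A \in \fix(m)$, so $A \cap (m \times \om) = \emptyset$, and compute
\[
\pi_A(\dot G_m) = \{\langle (p\restriction m)\check{\ }, \pi_A(p)\rangle \mid p \in \bbP\}.
\]
Since $\pi_A$ is an involution on $\bbP$, substitute $q = \pi_A(p)$; then $p = \pi_A(q)$ and, because $A \cap (m \times \om) = \emptyset$, $\pi_A(q)$ agrees with $q$ on $m \times \om$, giving $p \restriction m = q \restriction m$. So $\pi_A(\dot G_m) = \{\langle (q\restriction m)\check{\ }, q\rangle \mid q \in \bbP\} = \dot G_m$, as required.

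There is no real obstacle here; the only thing to watch is the subtle point that while $\pi_A$ may not fix each individual condition $p$, it preserves the map $p \mapsto p \restriction m$ setwise, which is precisely what makes the pair $\langle (p\restriction m)\check{\ }, p\rangle$ get permuted among itself as $p$ varies. Concluding: $\dot G_m$ is a hereditarily symmetric name, hence $G_m \in \HS^G$, and so $V[G_m] \subseteq \HS^G$.
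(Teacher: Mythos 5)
Your proposal is correct and matches the paper's approach: the paper simply declares $\{\seq{(p\restriction m\check{\ }), p} \mid p \in \bbP\}$ to be a canonical hereditarily symmetric name for $G_m$, and your verification that $\fix(m) \subseteq \sym(\dot G_m)$ (via the involution $\pi_A$ and the fact that $p\restriction m = \pi_A(p)\restriction m$ when $A \cap (m \times \om) = \emptyset$) is exactly the routine check being left implicit there. The concluding step, that $G_m \in \HS^G$ together with $V \subseteq \HS^G$ gives $V[G_m] \subseteq \HS^G$, is likewise the intended argument.
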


Finally we prove that $\aleph_n$ does not carry a uniform ultrafilter over $\aleph_n$ for every $n<\om$ in
$\HS^G$.

\begin{lemma}
In $\mathrm{HS}^G$, for every $n<\om$,
there is no uniform ultrafilter over $\aleph_n$.
\end{lemma}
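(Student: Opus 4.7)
The plan is to reduce the non-existence of a uniform ultrafilter over $\aleph_n$ in $\HS^G$ to the ground model statement Lemma \ref{3.2}, using the intermediate model $V[G_m]$. The case $n=0$ is handled immediately by Lemma \ref{3.4}, since a uniform ultrafilter over $\aleph_0=\om$ is just a non-principal ultrafilter over $\om$. So I would fix $1 \le n<\om$ and, for contradiction, assume $U \in \HS^G$ is a uniform ultrafilter over $\aleph_n$.

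First I would dispose of the non-$\sigma$-complete case inside $\HS^G$. If $U$ is not $\sigma$-complete in $\HS^G$, then Lemma \ref{1.9}, being a theorem of $\ZF$, applies inside $\HS^G$ and produces a non-principal ultrafilter on $\om$ there, contradicting Lemma \ref{3.4}. Hence $U$ may be assumed $\sigma$-complete in $\HS^G$.

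Next I would push $U$ down to $V[G_m]$. Fix a hereditarily symmetric name $\dot U$ for $U$ and $m<\om$ with $\fix(m) \subseteq \sym(\dot U)$, and set $U'= U \cap \mathcal{P}^{V[G_m]}(\aleph_n)$. Every $X \in \mathcal{P}^{V[G_m]}(\aleph_n)$ admits an $\Fn(m\times\om,2)$-name $\dot X$, which viewed as a $\bbP$-name satisfies $\fix(m) \subseteq \sym(\dot X)$ automatically. By Lemma \ref{3.3} the condition ``$X \in U$'' is equivalent to ``$\exists p \in G_m \,(p \Vdash \dot X \in \dot U)$'', hence is computable inside $V[G_m]$, giving $U' \in V[G_m]$. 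Since $V[G_m] \subseteq \HS^G$, any countable subfamily of $U'$ enumerated in $V[G_m]$ is also a countable subfamily of $U$ as seen in $\HS^G$, so the $\sigma$-completeness of $U$ transfers and $U'$ is a $\sigma$-complete uniform ultrafilter over $\aleph_n$ in $V[G_m]$ (note $\aleph_n^{V[G_m]}=\aleph_n^V$ since $\Fn(m\times\om,2)$ is countable).

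Finally, $V \models \mathsf{DC}_{\aleph_\om}$, so Countable Choice holds in $V$, and $\Fn(m\times\om,2)$ is countable. I would fix a $\Fn(m\times\om,2)$-name $\dot U'$ for $U'$ together with $p \in G_m$ forcing that $\dot U'$ is a $\sigma$-complete uniform ultrafilter over $\check \aleph_n$, then apply Lemma \ref{1.10} inside $V$ to the countable poset $\Fn(m\times\om,2) \restriction p$. This yields some $q \le p$ whose decision set $\{X \subseteq \aleph_n \mid q \Vdash \check X \in \dot U'\}$ is a $\sigma$-complete uniform ultrafilter over $\aleph_n$ lying in $V$, contradicting Lemma \ref{3.2}. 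The main point that needs care is verifying that $U'$ really lies in $V[G_m]$ and remains $\sigma$-complete there; both properties hinge on Lemma \ref{3.3} applied to $\Fn(m\times\om,2)$-names together with the inclusion $V[G_m] \subseteq \HS^G$ established just before the theorem.
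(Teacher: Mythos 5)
Your proposal is correct and follows essentially the same route as the paper: rule out $n=0$ and the non-$\sigma$-complete case via Lemmas \ref{1.9} and \ref{3.4}, restrict $U$ to $V[G_m]$ using Lemma \ref{3.3} and a name fixed by $\fix(m)$, then apply Lemma \ref{1.10} in $V$ (where Countable Choice follows from $\mathsf{DC}_{\aleph_\om}$) to contradict Lemma \ref{3.2}. Your extra care about restricting to the cone below a condition forcing the hypothesis of Lemma \ref{1.10}, and about transferring $\sigma$-completeness and uniformity to $V[G_m]$, only makes explicit what the paper leaves implicit.
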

\begin{proof}
Suppose not, and let $n<\om$ and $W$ a uniform ultrafilter over $\aleph_n$.
We know $n>0$ by Lemma \ref{3.4}.
If $W$ is not $\sigma$-complete in $\mathrm{HS}^G$,
then we can derive a non-principal ultrafilter over $\om$ by Lemma \ref{1.9},
this is a contradiction.
Hence $W$ is $\sigma$-complete.

Take a hereditarily symmetric name $\dot W$ for $W$ and $m<\om$
such that $\fix(m) \subseteq \sym(\dot W)$.
For each $X \in \p(\aleph_n) \cap V[G_m]$,
there is a hereditarily symmetric name $\dot X$ for $X$ with
$\fix(m) \subseteq \sym(\dot X)$.
Hence  by Lemma \ref{3.3}, we have:
\[
X \in W  \iff p \Vdash \dot X \in \dot W \text{ for some $p \in G_m$.}
\]
Thus $W'=W \cap V[G_m] \in V[G_m]$,
which is a $\sigma$-complete ultrafilter over $\aleph_n$ in $V[G_m]$.
Then by Lemma \ref{1.10}, we can find a $\sigma$-complete uniform ultrafilter over $\aleph_n$ in $V$,
this contradicts to Lemma \ref{3.2}.
\end{proof}

\section{$\aleph_\om$ can be the least cardinal not in $\calU$}
We start the proof of Theorem \ref{1.4}.
For a set $X$ and a cardinal $\ka$,
let $\Fn(X,2, \mathop{<}\ka)$ be the poset of all
partial functions $p:X \to 2$ with size $<\ka$.
The following lemma is well-known:
\begin{lemma}
Let $\ka$ be a regular uncountable cardinal and
$\bbP, \bbQ$ posets.
If $\bbP$ satisfies the $\ka$-c.c. and $\bbQ$ is $\ka$-closed,
then $\Vdash_\bbP$``\,$\check \bbQ$ is $\ka$-Baire''.
\end{lemma}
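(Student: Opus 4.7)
The plan is to establish the stronger statement that $\bbQ$ remains $\ka$-closed after forcing with $\bbP$, since $\ka$-closedness immediately implies $\ka$-Baire. Let $G$ be $(V,\bbP)$-generic and let $\seq{q_\xi : \xi<\alpha}\in V[G]$ be a decreasing sequence in $\bbQ$ of length $\alpha<\ka$, with a $\bbP$-name $\dot f$ and some $p^*\in G$ forcing $\dot f$ to be a decreasing sequence in $\check\bbQ$ of length $\check\alpha$. The task reduces to producing a lower bound of $\seq{q_\xi : \xi<\alpha}$ in $\bbQ$ inside $V[G]$.

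First I would localise $\dot f$ inside $V$. For each $\xi<\alpha$, the set of $p\le p^*$ deciding $\dot f(\check\xi)$ is dense below $p^*$, so choose a maximal antichain $A_\xi$ below $p^*$ of such conditions. By the $\ka$-c.c.\ we have $\size{A_\xi}<\ka$, and for each $p\in A_\xi$ there is a unique $q_{p,\xi}\in \bbQ$ with $p \Vdash \dot f(\check\xi)=\check q_{p,\xi}$. Set $g(\xi)=\{q_{p,\xi}\mid p\in A_\xi\}$; then $\size{g(\xi)}<\ka$, and $p^*$ forces $\dot f(\check\xi)\in \check{g(\xi)}$.

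Still in $V$, let $C$ be the set of functions $h\colon\alpha\to\bbQ$ which are decreasing in $\bbQ$ and satisfy $h(\xi)\in g(\xi)$ for every $\xi<\alpha$. Each $h\in C$ has a lower bound in $\bbQ$ by $\ka$-closure in $V$; using $\AC$ in the ZFC ground model I would fix a single function $h\mapsto q_h$ assigning to each $h\in C$ such a lower bound. In $V[G]$ the sequence $\dot f[G]$ lies in $C$, because its value at $\xi$ equals $q_{p,\xi}\in g(\xi)$ for the unique $p\in A_\xi\cap G$ and it is decreasing by the choice of $p^*$. Therefore $q_{\dot f[G]}\in V\subseteq V[G]$ is the desired lower bound, and so $\bbQ$ is $\ka$-closed, hence $\ka$-Baire, in $V[G]$.

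The obstacle I expect to matter most is the temptation to build the lower bound by a transfinite fusion, simultaneously refining a branch $(p_\xi,r_\xi)\in\bbP\times\bbQ$ with $p_{\xi+1}\Vdash \check r_{\xi+1}\le \dot f(\check\xi)$ and $r_{\xi+1}\le r_\xi$. This fails at limit stages because $\bbP$ is only $\ka$-c.c., not $\ka$-closed, so a decreasing branch in $\bbP$ need not admit a lower bound. The strategy above circumvents the fusion by enumerating in $V$ all possible outcomes $h\in C$ of the decisions in advance and discharging the combinatorial choice via $\AC$, after which the generic $G$ merely selects which $h$ occurred.
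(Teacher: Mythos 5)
Your reduction to the claim that $\bbQ$ remains $\ka$-closed in $V[G]$ cannot work, because that stronger statement is false in general; the conclusion of the lemma is only $\ka$-Baireness precisely because closure is typically destroyed by the $\ka$-c.c.\ forcing. The specific step that fails is the assertion that $\dot f[G]$ lies in $C$. You defined $C$ in $V$, so membership in $C$ requires the function itself to be an element of $V$; what you actually verify is only that $\dot f[G]$ is decreasing and takes values in the ground-model sets $g(\xi)$, and a $\ka$-c.c.\ poset can perfectly well add a \emph{new} function all of whose values lie in prescribed small ground-model sets. So $\dot f[G]$ need not be among the $h$'s for which you fixed lower bounds $q_h$, and the argument collapses. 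Concretely, take $\ka=\om_1$, $\bbP=\Fn(\om,2)$ (Cohen forcing, countable hence $\om_1$-c.c.) and $\bbQ=\Fn(\om_1,2,\mathop{<}\om_1)$, which is $\om_1$-closed in $V$. If $r\in V[G]$ is the Cohen real and $q_n=r\restriction n$, then $\seq{q_n \mid n<\om}$ is a decreasing sequence of conditions of $\bbQ$, each $q_n\in V$ and each $q_n$ drawn from a finite ground-model set, but a lower bound in $\bbQ$ would be a single condition of $V$ extending every $r\restriction n$ and would put $r$ into $V$. Hence $\bbQ$ is not $\om_1$-closed in $V[G]$, although (by the lemma) it is $\om_1$-Baire there.

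The repair is to prove Baireness directly rather than via closure: it suffices to show that, below any condition, forcing with $\check\bbQ$ over $V[G]$ adds no new function from an ordinal $\alpha<\ka$ into $V$. Work in $V$ with a $\bbP\times\bbQ$-name $\dot f$ for such a function, noting that a $\bbQ$-generic over $V[G]$ together with $G$ is $(V,\bbP\times\bbQ)$-generic. Build a decreasing sequence $\seq{q_\xi \mid \xi\le\alpha}$ in $\bbQ$: at stage $\xi$, extend $q_\xi$ through a run of length $<\ka$ (possible by $\ka$-closure of $\bbQ$ in $V$) to obtain $q_{\xi+1}\le q_\xi$ together with a maximal antichain $A_\xi\subseteq\bbP$, of size $<\ka$ by the $\ka$-c.c., each of whose elements paired with $q_{\xi+1}$ decides $\dot f(\check\xi)$; at limit stages use closure of $\bbQ$ again. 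The final condition $q_\alpha$ forces over $V[G]$ that $\dot f$ is read off from $G$ and the antichains $A_\xi$, hence its realization already lies in $V[G]$; a density argument then gives $\ka$-Baireness. Note that this fusion takes place entirely on the $\bbQ$-side, where you do have closure in $V$, while the $\bbP$-side is controlled only through antichains; that is how the obstacle you flagged (no closure in $\bbP$ at limits) is actually circumvented, and it is also why the conclusion is distributivity rather than closure.
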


Suppose GCH. 
For $n<\om$, let $\bbQ_n=\mathrm{Fn}(\aleph_\om, 2, \mathop{<}\aleph_n)$.
Let $\bbP$ be the full support product of the $\bbQ_n$'s, that is,
$\bbP=\prod_{n<\om} \bbQ_n$, and $p \le q \iff p(n) \le q(n)$ in $\bbQ_n$ for every $n<\om$.
For simplicity, we denote $p(n)(\alpha)$ as $p(n,\alpha)$.

Let $n<\om$ and $\ka=\aleph_{n+1}$.
The poset $\bbP$ can be factored as
the product $(\prod_{m \le n} \bbQ_n) \times (\prod_{n<m<\om} \bbQ_m)$.
The poest $\prod_{n<m<\om} \bbQ_m$ is $\ka$-closed,
and, by GCH, $\prod_{m \le n} \bbQ_n$ satisfies the $\ka$-c.c.
Thus $\prod_{m\le n} \bbQ_n$ forces that $\prod_{n<m<\om} \bbQ_m$ is $\ka$-Baire.
By using this observation,
one can check that 
$\bbP$ preserves all cofinalities and cardinals.

For $A \subseteq \om \times \aleph_\om$,
define the automorphism $\pi_A$ on $\bbP$ as follows:
$\dom(\pi_A(p) (n))=\dom(p(n))$ for every $n<\om$, and
\[
\pi_A(p)(n,\alpha)=\begin{cases}
1-p(n,\alpha) & \text{ if $\seq{n,\alpha} \in A$,}\\
p(n,\alpha) & \text{ if $\seq{n, \alpha} \notin A$}.
\end{cases}
\]
Let $\calG=\{\pi_A \mid A \subseteq \om \times \aleph_\om\}$,
this is a subgroup of the automorphism group of $\bbP$.
For $n<\om$,
let $\fix(n)=\{\pi_A \mid (n \times \aleph_\om) \cap A=\emptyset\}$.
$\fix(n)$ is a subgroup of $\calG$.
Let $\calF=\{H \subseteq \calG \mid H$ is a subgroup with $\fix(n) \subseteq H$ for some $n<\om\}$.
One can check that $\calF$ is a normal filter on $\calG$.



Take a $(V, \bbP)$-generic $G$.
For $n<\om$, let $G_n=\{p \restriction n \mid p \in G\}$
which is $(V, \prod_{m<n} \bbQ_m)$-generic.
Let $\dot G_n=\{ \seq{ (p\restriction n\check), p} \mid p \in \bbP\}$.
$\dot G_n$ is a hereditarily symmetric name for $G_n$ with
$\fix(n) \subseteq \sym(\dot G_n)$.
Hence we have:
\begin{lemma}
$G_n \in \HS^G$,
in particular $V[G_n] \subseteq \HS^G$
for every $n<\om$.
\end{lemma}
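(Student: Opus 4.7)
The plan is to verify that the explicitly written name $\dot G_n = \{\seq{\check{(p\restriction n)}, p} \mid p \in \bbP\}$ is hereditarily symmetric with $\fix(n) \subseteq \sym(\dot G_n)$, as the paragraph preceding the lemma already asserts. Once this is in place, $G_n \in \HS^G$ follows by interpreting $\dot G_n$ by $G$, which yields precisely $\{p\restriction n \mid p \in G\} = G_n$. For the second clause, $\HS^G$ is a transitive model of $\ZF$ that contains $V$ (through check names) and the new set $G_n$; since $V[G_n]$ is by definition the minimal such model, the inclusion $V[G_n] \subseteq \HS^G$ is immediate.

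For hereditary symmetry I would proceed in two steps. First, the inner coordinates $\check{(p\restriction n)}$ are check names of ground-model sets and are therefore pointwise fixed by every $\pi \in \calG$, so they are automatically hereditarily symmetric; only the outer symmetry of $\dot G_n$ itself requires an argument. Second, fix any $\pi_A \in \fix(n)$, so $A \cap (n \times \aleph_\om) = \emptyset$. Then for every $p \in \bbP$, the flip $\pi_A$ affects only coordinates $\seq{m,\alpha}$ with $m \ge n$, whence $\pi_A(p)\restriction n = p \restriction n$. Using that $\pi_A$ fixes check names and permutes $\bbP$ as a bijection, I would compute
\[
\pi_A(\dot G_n) = \{\seq{\check{(p\restriction n)}, \pi_A(p)} \mid p \in \bbP\} = \{\seq{\check{(\pi_A(p)\restriction n)}, \pi_A(p)} \mid p \in \bbP\} = \dot G_n,
\]
reindexing by $q := \pi_A(p)$ in the last equality.

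I do not anticipate any real obstacle: the argument is the standard symmetric-names calculus, essentially identical to the verification given for $V[G_m] \subseteq \HS^G$ in Section 3. The only bookkeeping point to stay alert to is that the definition of $\fix(n)$ — forbidding flips on $n \times \aleph_\om$ — matches precisely the set of coordinates relevant to $p \restriction n$, so that the equality $\pi_A(p)\restriction n = p\restriction n$ holds on the nose.
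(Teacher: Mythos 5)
Your proposal is correct and is essentially the paper's own argument: the paper proves the lemma precisely by noting that the canonical name $\dot G_n=\{\seq{(p\restriction n\check{\;}),p}\mid p\in\bbP\}$ is hereditarily symmetric with $\fix(n)\subseteq\sym(\dot G_n)$, which is exactly the computation you carry out (check names are fixed, $\pi_A(p)\restriction n=p\restriction n$ for $\pi_A\in\fix(n)$, then reindex). The deduction of $V[G_n]\subseteq\HS^G$ from $G_n\in\HS^G$ and minimality of the generic extension matches the paper's implicit step.
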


\begin{lemma}
For every $n<\om$, $\aleph_n$ carries a uniform ultrafilter over $\aleph_n$ in $\HS^G$.
\end{lemma}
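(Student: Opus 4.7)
The plan is to place a uniform ultrafilter on $\aleph_n$ already inside the $\ZFC$ intermediate extension $V[G_{n+1}]$, and then to argue that it remains an ultrafilter in $\HS^G$ because no new subset of $\aleph_n$ appears between the two models.

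First I would factor $\bbP$ as $(\prod_{m\le n}\bbQ_m)\times(\prod_{m>n}\bbQ_m)$, with full support in the tail.  Under $\mathsf{GCH}$, a standard $\Delta$-system argument shows that $\prod_{m\le n}\bbQ_m$ has the $\aleph_{n+1}$-c.c.: every condition has support of size $<\aleph_n$, and after thinning to a $\Delta$-system there are only $2^{<\aleph_n}=\aleph_n$ possible restrictions to the root.  For each $m>n$, the factor $\bbQ_m$ is $\aleph_m$-closed, since by regularity of $\aleph_m$ the union of a decreasing chain of length $<\aleph_m$ has domain of size $<\aleph_m$; in particular it is $\aleph_{n+1}$-closed, and a full-support product of $\aleph_{n+1}$-closed posets is itself $\aleph_{n+1}$-closed.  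Applying the preservation lemma stated at the start of this section, $\prod_{m\le n}\bbQ_m$ forces $\prod_{m>n}\bbQ_m$ to be $\aleph_{n+1}$-Baire, so no new $\aleph_n$-sequence of ordinals, and in particular no new subset of $\aleph_n$, is added over $V[G_{n+1}]$.  Hence $\p(\aleph_n)\cap V[G]=\p(\aleph_n)\cap V[G_{n+1}]$, and combining this with $V[G_{n+1}]\subseteq\HS^G\subseteq V[G]$ (from the preceding lemma) yields $\p(\aleph_n)\cap\HS^G=\p(\aleph_n)\cap V[G_{n+1}]$.

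Next I would work inside the $\ZFC$ model $V[G_{n+1}]$ and use $\AC$ to pick a uniform ultrafilter $U$ over $\aleph_n$.  Then $U\in V[G_{n+1}]\subseteq\HS^G$; and because every subset of $\aleph_n$ in $\HS^G$ already lies in $V[G_{n+1}]$ and so is decided by $U$, the filter $U$ remains an ultrafilter on $\aleph_n$ inside $\HS^G$.  Uniformity transfers automatically, since $\bbP$ preserves cardinals and every element of $U$ has cardinality $\aleph_n$ in $V[G_{n+1}]$, hence also in $\HS^G$.

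The main obstacle is the chain-condition/closure balance in the first paragraph: one must verify under $\mathsf{GCH}$ that $\prod_{m\le n}\bbQ_m$ genuinely has the $\aleph_{n+1}$-c.c.\ and that the full-support tail $\prod_{m>n}\bbQ_m$ is $\aleph_{n+1}$-closed, so that the preservation lemma applies and delivers $\aleph_{n+1}$-Baireness of the tail over $V[G_{n+1}]$.  Once that is in place, the existence of the ultrafilter is a routine appeal to choice inside $V[G_{n+1}]$.
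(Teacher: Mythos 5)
Your proposal is correct and follows essentially the same route as the paper: factor $\bbP$ as $(\prod_{m\le n}\bbQ_m)\times(\prod_{m>n}\bbQ_m)$, use the $\aleph_{n+1}$-c.c.\ of the head and $\aleph_{n+1}$-closure of the tail (via GCH and the preservation lemma) to get $\p(\aleph_n)^{V[G]}=\p(\aleph_n)^{V[G_{n+1}]}$, and then pick a uniform ultrafilter over $\aleph_n$ in the $\ZFC$ model $V[G_{n+1}]\subseteq\HS^G$, which remains a uniform ultrafilter in $\HS^G$. Your write-up is in fact somewhat more detailed than the paper's, spelling out the $\Delta$-system and closure verifications that the paper dispatches with a brief remark earlier in the section.
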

\begin{proof}
Fix $n<\om$.
$\bbP$ can be factored as
the product $(\prod_{m \le n} \bbQ_n) \times (\prod_{n<m<\om} \bbQ_m)$,
and, in $V[G_{n+1}]$, the poset
$(\prod_{n<m<\om} \bbQ_m)$ is $\aleph_{n+1}$-Baire.
Hence $\p(\aleph_n)^{V[G]}=\p(\aleph_n)^{V[G_{n+1}]}$.
Because $V[G_{n+1}]$ satisfies $\AC$,
we can find a uniform ultrafilter over $U$ in $V[G_{n+1}]$,
and $U$ remains an ultrafilter in $V[G]$.
Since $V[G_{n+1}] \subseteq \HS^G \subseteq V[G]$,
we have $U \in \HS^G$ is a uniform ultrafilter in $\HS^G$. 
\end{proof}

For each $n<\om$,
let $X_n=\{\alpha<\aleph_\om \mid p(n,\alpha)=1$ for some $p \in G\}$,
and $\dot X_n$ be the $\bbP$-name $\{\seq{\check \alpha, p} \mid p \in \bbP, p(n,\alpha)=1\}$.
$\dot X_n$
is a canonical hereditarily symmetric name for $X_n$ with 
$\fix(n+1) \subseteq \sym(\dot X_n)$.
Hence $X_n \in \HS^G$.
It is routine to check the following:
\begin{lemma}
Let $n<\om$ and $A \subseteq \om \times \aleph_\om$.
Let $x=\{\alpha<\aleph_\om \mid \seq{n, \alpha} \notin A\}$.
Then 
$\Vdash \dot X_n \cap \pi_A(\dot X_n) \subseteq \check x$.
\end{lemma}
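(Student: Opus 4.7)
The plan is to verify the statement by passing to an arbitrary $(V, \bbP)$-generic $G$ and computing the interpretation of both names inside $V[G]$. The key identity I would invoke is the standard one that for any automorphism $\pi$ of $\bbP$ and any $\bbP$-name $\dot x$, one has $(\pi(\dot x))^G = (\dot x)^{\pi[G]}$; combined with the fact that $\pi_A$ is its own inverse, this yields
\[
(\pi_A(\dot X_n))^G = (\dot X_n)^{\pi_A[G]}.
\]

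Next I would reduce the membership question to a single-coordinate computation. By a density argument, for each $\seq{n,\alpha}$ there exists $p \in G$ with $\seq{n,\alpha} \in \dom(p)$, and by compatibility the value $p(n,\alpha)$ does not depend on such $p$; call this common value $G(n,\alpha) \in \{0,1\}$. By the definition of $\dot X_n$ we then have $\alpha \in X_n^G$ iff $G(n,\alpha) = 1$, and analogously $\alpha \in (\dot X_n)^{\pi_A[G]}$ iff $(\pi_A[G])(n,\alpha) = 1$. From the definition of $\pi_A$, one immediately reads off
\[
(\pi_A[G])(n,\alpha) \;=\; \begin{cases} G(n,\alpha) & \text{if } \seq{n,\alpha} \notin A,\\ 1 - G(n,\alpha) & \text{if } \seq{n,\alpha} \in A. \end{cases}
\]

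Combining these, if $\alpha \in X_n^G \cap (\pi_A(\dot X_n))^G$ then $G(n,\alpha) = 1$ and $(\pi_A[G])(n,\alpha) = 1$ simultaneously, which forces $\seq{n,\alpha} \notin A$, i.e.\ $\alpha \in x$. Since $G$ was an arbitrary generic, we conclude $\Vdash \dot X_n \cap \pi_A(\dot X_n) \subseteq \check x$. There is no substantive obstacle here; the only care required is in stating the automorphism identity $(\pi(\dot x))^G = (\dot x)^{\pi[G]}$ correctly (with the involution observation so that we can write $\pi_A[G]$ rather than $\pi_A^{-1}[G]$) and in keeping straight the case split for whether $\seq{n,\alpha} \in A$.
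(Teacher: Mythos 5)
Your argument is correct; the paper states this lemma without proof (it is labelled ``routine to check''), and your semantic verification in an arbitrary generic extension is exactly the standard way to carry out that check, so there is no divergence to report. The only point needing care is the one you already flag yourself: the general identity is $(\pi(\dot x))^G = \dot x^{\pi^{-1}[G]}$, and you may replace $\pi_A^{-1}$ by $\pi_A$ because each $\pi_A$ is an involution; with that, the coordinatewise comparison of $G(n,\alpha)$ with $(\pi_A[G])(n,\alpha)$ yields precisely $\dot X_n^G \cap (\pi_A(\dot X_n))^G \subseteq x$ in every generic extension, hence $\Vdash \dot X_n \cap \pi_A(\dot X_n) \subseteq \check x$.
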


\begin{lemma}
There is no uniform ultrafilter over $\aleph_\om$ in $\HS^G$.
\end{lemma}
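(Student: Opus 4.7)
The plan is to mimic the symmetry argument of Lemma~\ref{3.4}, replacing the generic sets $x_m \subseteq \om$ there with the generic sets $X_n \subseteq \aleph_\om$ of this section. Assume for contradiction that $W \in \HS^G$ is a uniform ultrafilter over $\aleph_\om$, fix a hereditarily symmetric name $\dot W$ for it, and choose $n_0 < \om$ with $\fix(n_0) \subseteq \sym(\dot W)$.

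I would then pick any $n \geq n_0$ and use that $\dot W$ is forced to be an ultrafilter to locate $p \in G$ deciding which of $\dot X_n$ and $\check{\aleph_\om} \setminus \dot X_n$ is in $\dot W$; assume the former (the dual case is treated identically, as noted below). Choose $\gamma < \aleph_\om$ large enough that $\{\alpha : \langle n, \alpha\rangle \in \dom(p(n))\} \subseteq \gamma$, and set $A = \{n\} \times (\aleph_\om \setminus \gamma)$. Then $\pi_A(p) = p$ by construction, and since $n \geq n_0$ we have $A \cap (n_0 \times \aleph_\om) = \emptyset$, so $\pi_A \in \fix(n_0) \subseteq \sym(\dot W)$. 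Applying $\pi_A$ to $p \Vdash \dot X_n \in \dot W$ yields $p \Vdash \pi_A(\dot X_n) \in \dot W$, whence $p \Vdash \dot X_n \cap \pi_A(\dot X_n) \in \dot W$.

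The punchline comes from the preceding lemma: $\dot X_n \cap \pi_A(\dot X_n)$ is forced to be a subset of $\check\gamma$, so $\gamma \in W$. But $\bbP$ preserves cardinals, so $|\gamma| < \aleph_\om$ in $\HS^G$, contradicting uniformity of $W$. The dual case $p \Vdash \check{\aleph_\om} \setminus \dot X_n \in \dot W$ is symmetric: one obtains $p \Vdash \check{\aleph_\om} \setminus (\dot X_n \cup \pi_A(\dot X_n)) \in \dot W$, and the computation $\aleph_\om \setminus (X_n \cup \pi_A(X_n)) = \aleph_\om \setminus (X_n \cup A_n) \subseteq \gamma$ (with $A_n = \aleph_\om \setminus \gamma$) again forces a bounded subset of $\aleph_\om$ into $\dot W$, producing the same contradiction.

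I do not anticipate a serious obstacle. The whole argument rests on the fact that the symmetry support of $\dot W$ is bounded by a single $n_0 < \om$, whereas for every $n \geq n_0$ the bit-flipping automorphisms $\pi_{\{n\} \times B}$ (for $B$ disjoint from the level-$n$ support of $p$) both fix $\dot W$ and leave $p$ untouched, giving enough room to push a bounded initial segment of $\aleph_\om$ into the ultrafilter.
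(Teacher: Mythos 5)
There is a genuine gap, and it is localized in one step: your choice of $\gamma$. You pick $p \in G$ forcing $\dot X_n \in \dot W$ and then ``choose $\gamma < \aleph_\om$ large enough that $\{\alpha : \seq{n,\alpha} \in \dom(p(n))\} \subseteq \gamma$.'' For $n \ge 1$ such a $\gamma$ need not exist: a condition in $\bbQ_n = \Fn(\aleph_\om, 2, \mathop{<}\aleph_n)$ has domain of size $<\aleph_n$, and since $\cf(\aleph_\om)=\om < \aleph_n$, that domain can be cofinal in $\aleph_\om$ (e.g.\ it may contain $\{\aleph_k \mid k<\om\}$). Nor can you retreat to a different condition: conditions whose level-$n$ domain is bounded are not dense, and the $p$ you are handed is whatever forces $\dot X_n \in \dot W$. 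Since $n$ must be taken $\ge n_0$ (so that $\pi_A \in \fix(n_0) \subseteq \sym(\dot W)$), you cannot avoid this by picking $n=0$. So the punchline ``a bounded initial segment $\gamma$ lands in $W$'' is not available as stated.

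The repair is exactly what the paper does, and it preserves the rest of your argument verbatim: instead of bounding the domain, use the domain itself. Set $d=\dom(p(n))$ and $A=\{\seq{n,\alpha} \mid \alpha \notin d\}$; then $\pi_A(p)=p$, $\pi_A \in \fix(n)$, and $\Vdash \dot X_n \cap \pi_A(\dot X_n) \subseteq \check d$, so $p$ forces a set of cardinality $<\aleph_n$ into $\dot W$. The contradiction with uniformity comes from $\size{d} < \aleph_n < \aleph_\om$ (cardinality, using that $\bbP$ preserves cardinals), not from boundedness below $\aleph_\om$. With that substitution your automorphism argument --- bit-flipping at level $n$ off the support of $p$, fixing both $p$ and $\dot W$, and handling the dual case $\aleph_\om \setminus X_n \in W$ symmetrically --- is the same as the paper's proof.
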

\begin{proof}
Suppose not.
Let $U \in \HS^G$ be a uniform ultrafilter over $\aleph_\om$,
and $\dot U$ be a hereditarily symmetric name for $U$.
Take $n<\om$ with $\fix(n) \subseteq \sym(\dot U)$.
We show that both $X_n$ and  $\aleph_\om \setminus X_n$ are not in $U$.

First suppose to the contrary that $X_n \in U$.
Take $p \in G$ such that $p \Vdash \dot X_n \in \dot U$.
Let $d=\dom(p(n))$, and $A = \{\seq{n,\alpha} \in \om \times \aleph_\om \mid 
\alpha \notin d\}$.
We have $\pi_A \in \fix(n)$ and $\pi_A(p)=p$.
Moreover $\Vdash \dot X_n \cap \pi_A(\dot X_n) \subseteq \check d$.
Since $\pi_A \in \fix(n) \subseteq \sym(\dot U)$,
we have $p \Vdash \pi_A(\dot X_n) \in \pi_A(\dot U)=\dot U$,
so we have $p \Vdash \dot X_n \cap \pi_A(\dot X_n) \subseteq \check d \in \dot U$.
However this is impossible since $\size{d}<\aleph_n$.
The case that $\aleph_\om \setminus X_n \in U$ follows from a similar argument.
\end{proof}

This completes the proof of Theorem \ref{1.4}.
Our proof is flexible;
We can prove the following by a similar argument.
The proof is left to the reader.
\begin{thm}
Suppose GCH. Let $\alpha$ be a limit ordinal.
Then there is a  cardinal preserving symmetric extension
in which $\aleph_\alpha$ is the least cardinal not in $\calU$.
\end{thm}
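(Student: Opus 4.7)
The plan is to mirror the argument for Theorem 1.4, replacing $\om$ with $\alpha$ throughout. For each $\beta < \alpha$, set $\bbQ_\beta = \Fn(\aleph_\alpha, 2, \mathop{<}\aleph_{\beta+1})$ (conditions of size $<\aleph_{\beta+1}$), so $\bbQ_\beta$ is $\aleph_{\beta+1}$-closed; let $\bbP = \prod_{\beta<\alpha} \bbQ_\beta$ with full support. Define the automorphism group $\calG = \{\pi_A \mid A \subseteq \alpha \times \aleph_\alpha\}$ of bit-flipping automorphisms, $\fix(\beta) = \{\pi_A \in \calG \mid A \cap (\beta \times \aleph_\alpha) = \emptyset\}$ for $\beta < \alpha$, and the normal filter $\calF$ generated by the $\fix(\beta)$'s. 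Fix a $(V,\bbP)$-generic $G$ and form $\HS^G$; write $\bbP_\beta = \prod_{\gamma \le \beta}\bbQ_\gamma$ and $\bbP^\beta = \prod_{\beta<\gamma<\alpha}\bbQ_\gamma$, and let $G_{\beta+1}$ be the restriction of $G$ to $\bbP_\beta$. A canonical hereditarily symmetric name (as for $G_n$ in Section 4) yields $V[G_{\beta+1}] \subseteq \HS^G$.

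The key tool is the factorization $\bbP \cong \bbP_\beta \times \bbP^\beta$. The tail $\bbP^\beta$ is $\aleph_{\beta+1}$-closed, as a full support product of $\aleph_{\gamma+1}$-closed forcings for $\gamma > \beta$ (all of which are $\aleph_{\beta+1}$-closed). Under GCH, a $\Delta$-system argument shows $\bbP_\beta$ satisfies an appropriate chain condition, and Lemma 4.1 then gives that $\bbP^\beta$ is $\aleph_{\beta+1}$-Baire in $V[G_{\beta+1}]$. This preserves all cardinals $\le \aleph_{\beta+1}$ and yields $\p(\aleph_\beta)^{V[G]} = \p(\aleph_\beta)^{V[G_{\beta+1}]}$; since $\aleph_\alpha = \sup_{\beta<\alpha}\aleph_\beta$, $\aleph_\alpha$ is preserved as well.

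For $\aleph_\beta \in \calU^{\HS^G}$ (for $\beta<\alpha$): $V[G_{\beta+1}] \models \AC$ so contains a uniform ultrafilter $U$ on $\aleph_\beta$, which remains a uniform ultrafilter in $V[G]$ by the stability of $\p(\aleph_\beta)$, and $U \in V[G_{\beta+1}] \subseteq \HS^G$. For $\aleph_\alpha \notin \calU^{\HS^G}$: set $X_\beta = \{\xi<\aleph_\alpha \mid \exists p \in G\, (p(\beta,\xi)=1)\}$ with canonical name $\dot X_\beta$ satisfying $\fix(\beta+1) \subseteq \sym(\dot X_\beta)$. Given a hypothetical uniform ultrafilter $U$ on $\aleph_\alpha$ in $\HS^G$ with name $\dot U$ and $\fix(\beta_0) \subseteq \sym(\dot U)$, pick any $\beta \ge \beta_0$; with $p \Vdash \dot X_\beta \in \dot U$, let $d = \dom(p(\beta))$ and $A = \{\beta\}\times(\aleph_\alpha\setminus d)$. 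Then $\pi_A \in \fix(\beta_0) \subseteq \sym(\dot U)$, $\pi_A(p) = p$, and $\Vdash \dot X_\beta \cap \pi_A(\dot X_\beta) \subseteq \check d$; but $|d|<\aleph_{\beta+1}<\aleph_\alpha$, so $d \notin U$, contradicting uniformity. The case $\aleph_\alpha \setminus X_\beta \in U$ is symmetric.

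The main obstacle is verifying the chain condition of $\bbP_\beta$ at limit $\beta$: for finite $\beta$ the initial product reduces essentially to a single $\Fn$-style forcing with size-bound $<\aleph_{\beta+1}$ and the computation is standard under GCH, but at limit stages (especially $\cf(\beta)=\om$) one must show the full support product inherits the expected $\aleph_{\beta+1}$-c.c.\ via a $\Delta$-system argument. If this proves delicate, an alternative is to index the factors by regular cardinals $\kappa < \aleph_\alpha$ with $\bbQ_\kappa = \Fn(\aleph_\alpha, 2, \mathop{<}\kappa)$ and use Easton support, which streamlines the bookkeeping; the flipping argument at the end is unchanged, requiring only $|\dom(p(\beta))|<\aleph_\alpha$, which is automatic since $\alpha$ is a limit and $\beta<\alpha$.
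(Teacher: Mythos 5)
Your overall strategy is the intended one: mirror the Section 4 construction with index set $\alpha$ in place of $\om$, and your negative half (the flip $\pi_A$ with $A=\{\beta\}\times(\aleph_\alpha\setminus d)$ and $\size{d}<\aleph_\alpha$, applied to some $\beta\ge\beta_0$) is fine. The gap is exactly at the step you flagged and left unverified, and as stated it is not just delicate but false: with your indexing $\bbQ_\beta=\Fn(\aleph_\alpha,2,\mathop{<}\aleph_{\beta+1})$, the initial product $\bbP_\beta=\prod_{\gamma\le\beta}\bbQ_\gamma$ is \emph{never} $\aleph_{\beta+1}$-c.c.\ under GCH, since already the single factor $\bbQ_\beta$ contains the antichain of all total functions $f:\aleph_\beta\to 2$, which has size $2^{\aleph_\beta}=\aleph_{\beta+1}$. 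Cutting one coordinate earlier does not help at limit $\beta$: in $\prod_{\gamma<\beta}\bbQ_\gamma$ one may put a total function on $\aleph_\gamma$ in coordinate $\gamma$, giving an antichain of size $\prod_{\gamma<\beta}2^{\aleph_\gamma}=2^{\aleph_\beta}=\aleph_{\beta+1}$; and passing to Easton support does not remove these antichains either (nothing constrains the support below a singular $\aleph_\beta$), so your fallback does not address the real issue. Consequently Lemma 4.1 cannot be applied with $\ka=\aleph_{\beta+1}$ as you propose, and the equality $\p(\aleph_\beta)^{V[G]}=\p(\aleph_\beta)^{V[G_{\beta+1}]}$ --- on which both the positive half (every $\aleph_\delta$ with $\delta<\alpha$, including limit $\delta$, is in $\calU$) and cardinal preservation rest --- is left unproved.

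The repair is an off-by-one shift, not a change of forcing. A condition of $\bbP_\beta$ has total domain of size at most $\aleph_\beta$, so under GCH the $\Delta$-system lemma (using $\aleph_{\beta+1}^{\aleph_\beta}=\aleph_{\beta+1}$ and $2^{\aleph_\beta}=\aleph_{\beta+1}$) shows $\bbP_\beta$ is $\aleph_{\beta+2}$-c.c.; and the tail $\bbP^\beta=\prod_{\beta<\gamma<\alpha}\bbQ_\gamma$ is in fact $\aleph_{\beta+2}$-closed, since each factor is $\aleph_{\gamma+1}$-closed with $\gamma\ge\beta+1$. Applying Lemma 4.1 with $\ka=\aleph_{\beta+2}$ gives that $\bbP^\beta$ is $\aleph_{\beta+2}$-Baire over $V[G_{\beta+1}]$, hence $\p(\aleph_{\beta+1})^{V[G]}=\p(\aleph_{\beta+1})^{V[G_{\beta+1}]}$; letting $\beta$ range over $\alpha$ this covers every cardinal below $\aleph_\alpha$ and yields preservation of all cardinals $\le\aleph_\alpha$ exactly as in Section 4 (preservation of $\aleph_{\alpha+1}$ and above then follows as in the $\aleph_\om$ case when $\aleph_\alpha$ is singular; the weakly inaccessible case $\alpha=\aleph_\alpha$ deserves a separate sentence). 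With this correction the remainder of your argument, including the ultrafilter transfer from $V[G_{\beta+1}]$ and the flipping argument at $\aleph_\alpha$, goes through unchanged.
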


Let us say that a cardinal $\ka$ is \emph{strong limit}
if for every $\alpha<\ka$, there is no surjection from $\p(\alpha)$ onto $\ka$.
In the resulting model of Theorem \ref{1.4},
$\aleph_\om$ is not strong limit.
\begin{question}
Is it consistent that
$\aleph_\om$ is strong limit and  the least cardinal not in $\calU$?
\end{question}

\section{$\aleph_\om \notin \calU$ but $\la \in \calU$ for every $\la>\aleph_\om$ with countable cofinality}
In this section we give a proof of Theorem \ref{1.5}.

Suppose GCH, and there is a strongly compact cardinal $\ka$.
Every regular cardinal $\ge \ka$ carries a $\ka$-complete uniform ultrafilter.

Let $\bbP$ be the poset from the previous section,
and let $\col(\aleph_{\om+1}, \mathop{<}\ka)$ be the  standard $\aleph_{\om+1}$-closed Levy collapsing poset
which force $\ka$ to be $\aleph_{\om+2}$.
Take a $(V, \col(\aleph_{\om+1}, \mathop{<}\ka))$-generic $H$.
In $V[H]$, by Hayut-Karagila's symmetric collapse argument (\cite{HK}),
we can find a symmetric extension $M$ of $V$ in which the following hold:
\begin{enumerate}
\item $\ka=\aleph_{\om+2}$.
\item For every regular cardinal $\la \ge \ka$,
every $\ka$-complete uniform ultrafilter $U$ over $\la$ in $V$ generates
a $\ka$-complete uniform ultrafilter.
\item $\p(\aleph_\om)^M=\p(\aleph_\om)^V$.
\end{enumerate}
Next take a $(V[H], \bbP)$-generic $G$.
Since $\col(\aleph_{\om+1}, \mathop{<}\ka)$ is $\aleph_{\om+1}$-closed,
$\bbP$ preserves all cardinals between $V[H]$ and $V[H][G]$,
in particular $\ka=\aleph_{\om+2}$ in $V[G][H]$.
Then, take a symmetric extension $N$ of $M[G]$ via $\bbP$ as in the previous section.
We show that $N$ is a required model.

By the argument before, we can show that
in $N$, $\aleph_\om$ is the least cardinal not in $\calU$.
In addition, since $\bbP$ has cardinality $\aleph_{\om+1}$ in $V$ (and so in $M$),
for every regular cardinal $\la \ge \ka$, 
a $\ka$-complete uniform ultrafilter over $\la$ in $M$ generates 
an ultrafilter in $N$.

\begin{lemma}
In $N$, every singular cardinal $\la>\aleph_\om$ with countable cofinality carries a uniform ultrafilter.
\end{lemma}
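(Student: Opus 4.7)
The plan is to pull a $\ka$-complete uniform ultrafilter on $\la$ from $V$ (using strong compactness of $\ka$) through $M$, then $M[G]$, and finally down to $N$. Let $\la > \aleph_\om$ be a singular cardinal in $N$ with $\cf^N(\la) = \om$. Since $\aleph_{\om+1}^N$ is a successor, hence regular, and $\aleph_{\om+2}^N = \ka$ (the symmetric collapse producing $M$, the forcing $\bbP$, and the passage from $M[G]$ to $N$ all preserve this segment of the cardinal structure), we have $\la \ge \ka$. Next I would trace cofinalities downward. First, $N \subseteq M[G]$ yields $\cf^{M[G]}(\la) = \om$. Since $\bbP$ has size $\aleph_{\om+1} < \ka$ in $M$ and hence is $\ka$-c.c., it preserves cofinalities, giving $\cf^M(\la) = \om$. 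The analogous preservation properties of the $\aleph_{\om+1}$-closed, $\ka$-c.c.\ Levy collapse $\col(\aleph_{\om+1}, \mathop{<}\ka)$ then give $\cf^V(\la) = \om$.

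Since $\ka$ is strongly compact in $V$ and $\la \ge \ka$, there is a $\ka$-complete uniform ultrafilter $U \in V$ on $\la$ (derived from a fine $\ka$-complete measure on $\p_\ka(\la)$ via a choice function). The Hayut--Karagila symmetric collapse argument used to construct $M$ relies only on $\ka$-completeness of the filter and the chain condition of the collapse, not on regularity of $\la$, so the same argument shows that $U$ generates a $\ka$-complete uniform ultrafilter $\hat U \in M$ on $\la$. I would then repeat the proof of Lemma~\ref{3.5} almost verbatim, with $\sigma$-completeness and countability of the forcing replaced by $\ka$-completeness of $\hat U$ and $\size{\bbP}^M = \aleph_{\om+1} < \ka$: for any $\bbP$-name $\dot X$ for a subset of $\la$ and any $p \in \bbP$, setting $Y_q = \{\alpha < \la \mid q \Vdash \check\alpha \in \dot X\}$ for $q \le p$, either some $Y_q \in \hat U$ (so that $q \Vdash \check Y_q \subseteq \dot X$), or $\bigcap_{q \le p}(\la \setminus Y_q) \in \hat U$ by $\ka$-completeness (there are fewer than $\ka$ such $q$). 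Hence $W = \{X \subseteq \la \mid \exists Y \in \hat U(Y \subseteq X)\}$ is an ultrafilter in $M[G]$, and since it is definable from $\hat U \in M \subseteq N$, its trace $W \cap N$ lies in $N$.

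For uniformity in $N$: any $Y \in \hat U$ has $\size{Y}^M = \la \ge \ka$, and the $\ka$-c.c.\ of $\bbP$ over $M$ preserves cardinalities $\ge \ka$, giving $\size{Y}^{M[G]} = \la$. Since $N \subseteq M[G]$ contains no new bijections of $Y$ onto smaller ordinals, $\size{Y}^N = \la$. Therefore $W \cap N$ is a uniform ultrafilter on $\la$ in $N$, which is what we want.

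The main obstacle I expect is the verification that the Hayut--Karagila symmetric collapse preserves $\ka$-complete uniform ultrafilters on \emph{singular} $\la \ge \ka$ with $\cf^V(\la) = \om$; property (2) of $M$ as quoted in the text is stated for regular $\la$ only. One should either extract from \cite{HK} that regularity is never used (the argument there being forcing-theoretic, depending on closure and chain condition), or supply a parallel argument for the singular case. The remaining steps, namely the cofinality and cardinality tracking through $V \subseteq M \subseteq M[G] \supseteq N$ and the Lemma~\ref{3.5}-style ultrafilter generation, are routine given the infrastructure already developed in the paper.
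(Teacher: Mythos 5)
Your plan breaks at the very first transfer step, and the problem is more fundamental than the regular-versus-singular issue you flag at the end: the object you want to pull out of $V$ does not exist. For $\la$ of countable cofinality there is \emph{no} $\sigma$-complete (hence no $\ka$-complete) uniform ultrafilter on $\la$ in any model: writing $\la=\bigcup_{n<\om}[\la_n,\la_{n+1})$ with each piece of size $<\la$, $\sigma$-completeness forces one piece into the ultrafilter, contradicting uniformity. (The paper uses exactly this observation when it notes that every uniform ultrafilter over $\aleph_\om$ fails to be $\sigma$-complete.) Strong compactness of $\ka$ does give a uniform ultrafilter on singular $\la\ge\ka$, but only a non-$\sigma$-complete one, and such an ultrafilter cannot be pushed through the Levy collapse, the symmetric collapse, or $\bbP$ by the completeness-versus-size-of-forcing arguments you invoke (your Lemma~\ref{3.5}-style computation needs completeness exceeding $\size{\bbP}$). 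So the ``main obstacle'' you name --- whether the Hayut--Karagila argument works for singular $\la$ --- cannot be patched: there is nothing $\ka$-complete on $\la$ to preserve.

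The paper's proof routes around this by never putting an ultrafilter on the singular $\la$ in $V$ at all. It fixes in $V$ an increasing sequence $\seq{\la_n \mid n<\om}$ of \emph{regular} cardinals cofinal in $\la$ together with $\ka$-complete uniform ultrafilters $U_n$ on $\la_n$; these are exactly the objects covered by property~(2) of $M$ (stated for regular cardinals) and by the $\size{\bbP}=\aleph_{\om+1}<\ka$ argument, so each $U_n$ generates an ultrafilter in $N$. Then, working \emph{inside} $N$, where a non-principal ultrafilter $U$ on $\om$ is available (every $\aleph_n$ is in $\calU$ there), it forms the sum $X \in W \iff \{n<\om \mid X\cap\la_n \in U_n\}\in U$, which is a uniform ultrafilter on $\la$ in $N$. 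If you want to salvage your write-up, replace the single ultrafilter on $\la$ by this sequence-plus-sum construction; your cardinal and cofinality bookkeeping through $V\subseteq M\subseteq M[G]\supseteq N$ can be kept, but the transfer must be done for the $\la_n$'s, not for $\la$.
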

\begin{proof}
Fix a singular cardinal $\la>\aleph_\om$ with countable cofinality.
Note that $\cf(\la)=\om$ in $V$.
In $V$, fix an increasing sequence $\seq{\la_n \mid n<\om}$ of
regular cardinals with limit $\la$,
and fix also $\seq{U_n \mid n<\om}$ such that
each $U_n$ is a $\ka$-complete uniform ultrafilter over $\la_n$.

In $N$, for each $n<\om$,
$U_n$ generates an ultrafilter.
Fix a non-principal ultrafilter $U$ over $\om$,
and define a filter $W$ over $\la$ as:
\[
X \in W \iff \{n<\om \mid X \cap \la_n \in U_n\} \in U.
\]
It is easy to check that $W$ is a uniform ultrafilter over $\la$.
\end{proof}
By a similar argument, one can prove that every cardinal $>\aleph_{\om+1}$ with cofinality not equal to $\aleph_{\om+1}$ carries a uniform ultrafilter in $N$, however $\aleph_{\om+1}$ would not.
\begin{question}
Is it consistent that $\aleph_\om$ is the unique cardinal not in $\calU$?
\end{question}

\section{Consistency strength about $\calU$}
To prove Theorem \ref{1.6}, we need the notions of indecomposable ultrafilter and
regular ultrafilter.
\begin{define}
Let $U$ be an ultrafilter over a set $S$.
\begin{enumerate}
\item Let $\ka$ be a cardinal.
$U$ is said to be \emph{$\ka$-indecomposable} if
for every $f:S \to \ka$,
there is $X \in [\ka]^{<\ka}$ such that $f^{-1}(X) \in U$.
\item For cardinals $\ka \le \la$,
$U$ is said to be \emph{$(\ka,\la)$-regular}
if there is a family $\{A_\alpha \mid \alpha<\la\} \subseteq U$
such that $\bigcap_{\alpha \in x}A_\alpha=\emptyset$ for every $x \in [\la]^{\ka}$.
\end{enumerate}
\end{define}

The existence of  non-regular ultrafilters is a large cardinal property.
\begin{thm}[Donder \cite{D}]\label{5.1}
If there are cardinals $\ka<\la$ such that
$\la$ carries a uniform ultrafilter which is not $(\om, \ka)$-regular,
then there is an inner model of a measurable cardinal.
\end{thm}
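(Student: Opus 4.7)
The plan is to prove the contrapositive: assume that no inner model contains a measurable cardinal, and show that every uniform ultrafilter on every cardinal $\la$ is automatically $(\om,\ka)$-regular for every infinite $\ka \le \la$. The natural tool is the Dodd--Jensen core model $K$. Under the hypothesis ``$0^\dagger$ does not exist'' (equivalently, no inner model of a measurable cardinal), $K$ is defined, satisfies $\GCH$, and obeys the covering lemma: every uncountable set of ordinals is contained in a set of $K$ of the same cardinality. This is what one has to trade against $U$ in order to build a witnessing regular family.

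First I would fix $\ka<\la$ and a uniform ultrafilter $U$ on $\la$, and set up an ultrapower-style analysis. One takes skolem functions $f:\la\to\la$ and studies the equivalence classes $[f]_U$; in particular the identity function witnesses a ``generic'' ordinal above the image of $j \restriction \ka$ in $\ult(V,U)$. Working inside $K$, the covering lemma says that any $\ka$-sized set of ordinals can be absorbed into a $K$-set of size $\ka$, so after taking a Skolem hull $X\prec H_\theta$ of size $\ka$ one obtains a $K$-set $Y\supseteq X\cap \la$ of size $\ka$ and a well-behaved $K$-indexing of the ordinals below $\la$ relative to $Y$. The $\GCH$ in $K$ lets one enumerate the canonical $K$-Skolem terms relevant to $Y$ in type $\ka$.

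Second, I would convert this structure into an explicit regularity family. For each $\xi<\ka$, define $A_\xi\in U$ to be the set of $\alpha<\la$ on which some prescribed $K$-Skolem term $t_\xi$ (evaluated at $\alpha$ together with parameters from $Y$) takes a specified value that is forced by $U$-largeness. The key combinatorial point is that, because every $\alpha\in\bigcap_{n<\om}A_{\xi_n}$ is then pinned down modulo an $\om$-sequence of $K$-computations in a structure of size $<\ka$, and because covering forces the fiber of any such $\om$-pattern to be of size $<\la$, one obtains $\bigcap_n A_{\xi_n}=\emptyset$ for ``generic'' choices of $\xi_n\in\ka$. A counting argument on the possible $\om$-patterns, using $\GCH$ in $K$, allows one to thin $\{A_\xi:\xi<\ka\}$ to a genuinely $(\om,\ka)$-regular family, contradicting the assumption that $U$ is not $(\om,\ka)$-regular.

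The main obstacle is the case where $\la$ is singular of small cofinality, where the straightforward Dodd--Jensen covering statement is weaker and one needs the refined ``weak covering'' consequences at $\la$ itself, together with care about which $K$-cardinals are preserved; this is where Donder's original argument does its hardest work. A secondary obstacle is that the ultrapower $\ult(V,U)$ may be ill-founded and one must do everything on the filter side, replacing elementarity by \L o\'s-style arguments on $K$-definable relations. I would expect the bulk of the writing to go into verifying that the candidate family $\{A_\xi\}$ really witnesses $(\om,\ka)$-regularity, rather than into the overall strategy, which is essentially ``covering kills non-regularity.''
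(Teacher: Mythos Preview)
The paper does not prove this theorem at all: it is quoted as Donder's result (reference \cite{D}) and used as a black box in the proof of Theorem~\ref{1.6}. There is therefore no ``paper's own proof'' to compare your proposal against.

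As for the content of your sketch, the overall strategy---assume no inner model of a measurable, invoke the Dodd--Jensen core model $K$ and its covering lemma, and use the resulting combinatorics to produce an $(\omega,\kappa)$-regular family---is indeed the framework Donder works in. But what you have written is a plan, not a proof: the crucial step, namely the definition of the sets $A_\xi$ and the verification that any countable intersection is empty, is left at the level of ``some prescribed $K$-Skolem term'' and ``a counting argument.'' Donder's actual argument is substantially more delicate (it goes through the analysis of mice and the fine structure of $K$, and in particular handles the singular-$\lambda$ case you flag as the main obstacle with specific tools, not just ``weak covering''). If you intend to reproduce Donder's theorem rather than cite it, you would need to supply those details; as written, your proposal identifies the right toolbox but does not carry out the construction.
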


The following lemmas are  kind of folklore.
\begin{lemma}
Let $U$ be an ultrafilter over a set $S$
and $\ka$  a cardinal.
Then the following are equivalent:
\begin{enumerate}
\item $U$ is $(\om, \ka)$-regular.
\item There is a family $\{B_s \mid s \in S\}$ such that
$B_s \in [\ka]^{<\om}$ and $\{s \in S \mid \alpha \in B_s\} \in U$ for every
$\alpha<\ka$.
\end{enumerate}
\end{lemma}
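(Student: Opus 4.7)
The plan is to prove both implications by passing between two dual presentations of the same object, namely the $S \times \ka$ incidence matrix that records, for each $s \in S$ and $\alpha < \ka$, whether $s$ lies in $A_\alpha$ (equivalently, whether $\alpha$ lies in $B_s$). Given a regular family one reads off the $B_s$'s as the columns, and given the $B_s$'s one reads off the $A_\alpha$'s as the rows; finiteness of the $B_s$'s corresponds exactly to the statement that no infinite subset $x \subseteq \ka$ is included in the same $B_s$, which is precisely the regularity condition $\bigcap_{\alpha \in x} A_\alpha = \emptyset$.

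For (1) $\Rightarrow$ (2), I would take a family $\{A_\alpha \mid \alpha < \ka\} \subseteq U$ witnessing $(\om, \ka)$-regularity and define, for each $s \in S$, $B_s = \{\alpha < \ka \mid s \in A_\alpha\}$. The hypothesis that $\bigcap_{\alpha \in x} A_\alpha = \emptyset$ for every $x \in [\ka]^\om$ says that no $s$ can belong to countably many $A_\alpha$'s simultaneously, so $B_s$ is finite. The condition $\{s \in S \mid \alpha \in B_s\} = A_\alpha \in U$ is then immediate from the definition.

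For (2) $\Rightarrow$ (1), conversely, given $\{B_s \mid s \in S\}$ as in (2), set $A_\alpha = \{s \in S \mid \alpha \in B_s\}$; this lies in $U$ by hypothesis. For any $x \in [\ka]^\om$ the intersection $\bigcap_{\alpha \in x} A_\alpha$ equals $\{s \in S \mid x \subseteq B_s\}$, which is empty because each $B_s$ is finite while $x$ is infinite.

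There is no real obstacle here: the argument is a pure combinatorial transposition, and the only thing to be careful about is that the definition of $(\om, \ka)$-regularity demands $\bigcap_{\alpha \in x} A_\alpha = \emptyset$ for \emph{every} countably infinite $x$ (not merely some), which is exactly what matches the finiteness of each $B_s$. No use of the Axiom of Choice is required, since both translations are explicit and uniform in $s$ and $\alpha$.
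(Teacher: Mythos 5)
Your proposal is correct and is essentially the same argument as the paper's: define $B_s=\{\alpha<\ka \mid s\in A_\alpha\}$ in one direction and $A_\alpha=\{s\in S \mid \alpha\in B_s\}$ in the other, with finiteness of the $B_s$'s corresponding exactly to the emptiness of countable intersections. Your remark that no choice is needed is also fine, since $B_s\subseteq\ka$ is well-ordered, so extracting a countably infinite subset of an infinite $B_s$ is canonical.
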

\begin{proof}
(1) $\Rightarrow$ (2).
Fix a family $\{A_\alpha \mid \alpha<\ka\} \subseteq U$ witnessing that $U$ is $(\om ,\ka)$-regular.
For each $s \in S$, let $B_s=\{\alpha<\ka \mid s \in A_\alpha\}$.
By the choice of the $A_\alpha$'s, we have that $B_s$ is finite.
Moreover, for $\alpha<\ka$, we have
$\{s \in S \mid \alpha \in B_s\}=A_\alpha \in U$.

(2) $\Rightarrow$ (1). 
Let $A_\alpha=\{s \in S \mid \alpha \in B_s\} \in U$ for $\alpha<\ka$.
For $x \in [\ka]^\om$, if there is $s \in \bigcap_{\alpha \in x} A_\alpha$,
then $B_s$ is infinite, this is a contradiction.
\end{proof}

\begin{lemma}
Let $U$ be an ultrafilter over a set $S$
and $\ka$  a cardinal.
If $U$ is $\ka$-indecomposable,
then $U$ is not $(\om, \ka)$-regular.
\end{lemma}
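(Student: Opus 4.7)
The plan is to prove the contrapositive: assume $U$ is $(\om,\ka)$-regular and construct $f:S\to\ka$ such that $f^{-1}(X)\notin U$ for every $X\in[\ka]^{<\ka}$, thereby witnessing that $U$ is not $\ka$-indecomposable. The preceding lemma already recasts $(\om,\ka)$-regularity in a convenient pointwise form: there is a family $\{B_s:s\in S\}$ with each $B_s\in[\ka]^{<\om}$ and $C_\alpha:=\{s\in S:\alpha\in B_s\}\in U$ for every $\alpha<\ka$. I would build $f$ directly from this family.

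In $\ZF$, $|[\ka]^{<\om}|=\ka$ for every infinite cardinal $\ka$ (the standard $|\ka\times\ka|=\ka$ argument is choice-free for well-orderable $\ka$). Fix a bijection $\phi:[\ka]^{<\om}\to\ka$ and set $f(s)=\phi(B_s)$. Suppose toward a contradiction that $f^{-1}(X)\in U$ for some $X\in[\ka]^{<\ka}$; let $X'=\phi^{-1}(X)\subseteq[\ka]^{<\om}$ and $T=\bigcup X'\subseteq\ka$. Since $X'$ is a family of fewer than $\ka$ many finite sets, $|T|<\ka$, so we may pick $\alpha\in\ka\setminus T$. Then $f^{-1}(X)\cap C_\alpha\in U$ is nonempty; any $s$ in it has $f(s)\in X$ and $\alpha\in B_s$, but the former gives $B_s\in X'$, hence $B_s\subseteq T$ and $\alpha\notin B_s$, a contradiction.

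The only point to watch is that the even shorter choice $f(s)=\max B_s$ handles regular uncountable $\ka$ immediately, since then $\gamma:=\sup X+1<\ka$ and any $s\in f^{-1}(X)\cap C_\gamma$ yields $\gamma\le\max B_s=f(s)\in X$, contradicting $\gamma>\sup X$. For singular $\ka$, however, $|X|<\ka$ does not force $\sup X<\ka$, so the max-based argument breaks down; the bijective coding via $\phi$ sidesteps this issue and gives a uniform, choice-free proof.
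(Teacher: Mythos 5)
Your proof is correct and is essentially the paper's argument: both use the pointwise family $\{B_s \mid s \in S\}$ from the preceding lemma, code $[\ka]^{<\om}$ by $\ka$ (the paper's ``$\ka^{<\om}=\ka$'' is your bijection $\phi$), apply indecomposability to $s \mapsto B_s$, and pick $\alpha$ outside the union of the resulting small family of finite sets to reach the same contradiction. The only differences are cosmetic (contrapositive framing and making the coding explicit).
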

\begin{proof}
Suppose not.
By the previous lemma,
we can find a family 
$\{B_s \mid s \in S\}$ such that
$B_s \in [\ka]^{<\om}$ and $\{s \in S \mid \alpha \in B_s\} \in U$ for every
$\alpha<\ka$.
Since $U$ is $\ka$-indecomposable and $\ka^{<\om}=\ka$,
there is $X \subseteq  [\ka]^{<\om}$ such that
$\size{X}<\ka$ and 
$\{s \in S \mid B_s \in X\} \in U$.
Because $\size{\bigcup X}<\ka$,
we can pick $\alpha \in \ka \setminus \bigcup X$.
Then there must be
$s \in S$ such that $B_s \in X$ but 
$\alpha \in B_s$, this is a contradiction.
\end{proof}

By this lemma and Donder's theorem,
we have:
\begin{prop}\label{5.1}
If there are cardinals $\ka<\la$ such that $\la$ carries a
$\ka$-indecomposable uniform ultrafilter,
then there is an inner model of a measurable cardinal.
\end{prop}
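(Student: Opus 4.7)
The proof is a one-step combination of the two preceding lemmas, so the plan reduces to chaining them correctly. I would begin by taking the hypothesized data: cardinals $\ka<\la$ and a $\ka$-indecomposable uniform ultrafilter $U$ on $\la$. The immediately preceding lemma, whose content is exactly that $\ka$-indecomposability precludes $(\om,\ka)$-regularity, then tells me at once that $U$ fails to be $(\om,\ka)$-regular.

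Next I would invoke Donder's theorem, cited earlier in the section: it takes as input a uniform ultrafilter on some $\la$ which is not $(\om,\ka)$-regular for some $\ka<\la$, and outputs an inner model with a measurable cardinal. Since the witness $U$ supplied by the hypothesis satisfies all three requirements — uniformity, $\la$ as the underlying set, and failure of $(\om,\ka)$-regularity — the conclusion follows directly.

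There is essentially no obstacle to overcome; the substantive work lies upstream, in the preceding combinatorial lemma (which uses $\ka^{<\om}=\ka$ together with indecomposability to rule out any hypothetical family $\{B_s\}$ of finite sets) and in Donder's inner-model construction, which is cited as a black box. The only point of care is to confirm that the hypotheses of Donder's theorem are matched as stated — but since both the underlying set $\la$ and the uniformity of $U$ are unchanged by the first lemma, this is immediate, and the proposition follows.
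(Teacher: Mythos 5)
Your proposal is correct and matches the paper exactly: the proposition is obtained by combining the preceding lemma (a $\ka$-indecomposable ultrafilter is not $(\om,\ka)$-regular) with Donder's theorem, precisely as you describe. The only implicit point, which you also note, is that the lemma's use of $\ka^{<\om}=\ka$ is harmless since the proposition is invoked inside a $\ZFC$ model.
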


We start the proof of Theorem \ref{1.6}.

\begin{thm}[In $\ZF$]
If there are cardinals $\ka<\la$ with $\ka \notin \calU$ but $\la \in \calU$,
then there is an inner model of a measurable cardinal.
\end{thm}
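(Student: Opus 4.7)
The plan is to argue by contraposition using Proposition \ref{5.1} together with a standard pushforward construction. Assume toward contradiction that there is no inner model with a measurable cardinal, and fix $\ka<\la$ with $\ka \notin \calU$ and $\la \in \calU$; let $U$ be a uniform ultrafilter over $\la$. The goal is to manufacture a uniform ultrafilter over $\ka$ from $U$, contradicting $\ka \notin \calU$.

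First I would apply Proposition \ref{5.1} in contrapositive form: since no inner model of a measurable cardinal exists and $\ka<\la$, the ultrafilter $U$ cannot be $\ka$-indecomposable. By the definition of indecomposability, this yields a function $f:\la \to \ka$ such that $f^{-1}(X) \notin U$ for every $X \in [\ka]^{<\ka}$. Next I would consider the pushforward $f_*(U)=\{X \subseteq \ka \mid f^{-1}(X) \in U\}$ and verify routinely that it is an ultrafilter on $\ka$ (this only uses that $U$ is an ultrafilter and that $f^{-1}$ commutes with Boolean operations). Uniformity is exactly the point of the previous step: no $X \subseteq \ka$ with $\size X<\ka$ belongs to $f_*(U)$, so every element of $f_*(U)$ has cardinality $\ka$. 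Hence $\ka \in \calU$, the desired contradiction.

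The only genuine content is already packaged inside Proposition \ref{5.1}, whose proof combines the folklore equivalence ``$\ka$-indecomposable $\Rightarrow$ not $(\om,\ka)$-regular'' with Donder's theorem on non-regular ultrafilters; everything else is a soft $\ZF$ manipulation of ultrafilters under $f^{-1}$, which requires no choice whatsoever. I do not foresee a real obstacle: the main thing to watch is to formulate things so that one does not accidentally invoke $\AC$ when checking the ultrafilter axioms for $f_*(U)$, but this is automatic since $f^{-1}$ preserves all the relevant set operations pointwise.
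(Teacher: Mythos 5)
There is a genuine gap, and it sits exactly where you wave it away: you apply Proposition \ref{5.1} (equivalently, Donder's theorem) \emph{in $V$}, but $V$ is only assumed to satisfy $\ZF$, and Donder's theorem is a $\ZFC$ result whose proof runs through core model and covering machinery. The paper flags this explicitly (``we cannot apply Donder's theorem at the moment, since $V$ may not satisfy the Axiom of Choice''). Your opening step --- $U$ is $\ka$-indecomposable, since otherwise the pushforward $f_*(U)$ along a witnessing $f:\la\to\ka$ is a uniform ultrafilter on $\ka$ --- is fine in $\ZF$ and is precisely the paper's first paragraph (you phrase it contrapositively, the paper directly, but it is the same computation). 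The problem is the next step: from ``$U$ is $\ka$-indecomposable'' you cannot invoke Proposition \ref{5.1} without a choice-satisfying ambient model, so the contradiction you aim for is not yet available.

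The real content of the theorem, which your proposal omits, is the transfer into an inner model of $\ZFC$. The paper works in $\HOD[U]$ with $U'=U\cap\HOD[U]$ and must show $U'$ is $\ka$-indecomposable \emph{there}. When $\ka$ is regular this is easy (some single fiber $f^{-1}(\alpha)$ is in $U$). But when $\ka$ is singular, a function $f\in\HOD[U]$ has a witness $X\in[\ka]^{<\ka}$ living in $V$, possibly outside $\HOD[U]$; the paper captures $X$ in a set-generic extension $\HOD[U][G]$ via Vop\v enka's theorem and then, assuming no inner model with a measurable, uses the Dodd--Jensen covering theorem over $K^{\HOD[U][G]}=K^{\HOD[U]}$ to replace $X$ by a covering set $Y\in\HOD[U]$ with $\size{Y}^{\HOD[U]}<\ka$ and $f^{-1}(Y)\in U'$. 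Only then does Proposition \ref{5.1}, applied inside the $\ZFC$ model $\HOD[U]$, yield the inner model with a measurable. So ``the only genuine content is already packaged inside Proposition \ref{5.1}'' is not accurate: without the $\HOD[U]$/Vop\v enka/covering argument (or some substitute establishing a choiceless form of Donder's theorem), your proof does not go through.
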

\begin{proof}
Fix a uniform ultrafilter $U$ over $\la$.
Then $U$ must be $\ka$-indecomposable;
If not, there is $f:\la \to \ka$ such that
$f^{-1}(X) \notin U$ for every $X \in [\ka]^{<\ka}$.
Then the ultrafilter $f_*(U)=\{X \subseteq \ka \mid f^{-1}(X) \in U\}$
forms a uniform ultrafilter over $\ka$, so $\ka \in \calU$.
This is a contradiction.

We note that, we cannot apply Donder's theorem  at the moment,
since $V$ may not satisfy the Axiom of Choice.
To settle this matter, we have to take an inner model of $\ZFC$ and work in it.

Here we recall some definition.
Let $\mathrm{OD}[U]$ be the class of all sets which are definable with parameters $U$ and ordinals,
and $\HOD[U]$ be of all $x$ with $\mathrm{trcl}(\{x\}) \subseteq \mathrm{OD}[U]$.
$\HOD[U]$ is a transitive model of $\ZFC$ with
$U'=U \cap \HOD[U] \in \HOD[U]$.
$U'$ is a uniform ultrafilter over $\la$ in $\HOD[U]$.
We use the following Vop\v enka's theorem:
For every set $X$ of ordinals,
there is a poset $\bbP \in \HOD[U]$ and
a $(\HOD[U], \bbP)$-generic $G \in V$ with
$X \in \HOD[U][G]$.
See, e.g., Woodin-Davis-Rodriguez \cite{WDR} for the proof.
Note that, for proving 
 Vop\v enka's theorem,
$V$ does not need to satisfy $\AC$.

Case 1: $\ka$ is regular.
We see that $U'$ is $\ka$-indecomposable in $\HOD[U]$.
To do this, take $f:\la \to \ka$ with $f \in \HOD[U]$.
By the assumption and the regularity of $\ka$,
there is $\alpha<\ka$ such that
$f^{-1}(\alpha) \in U$, hence $f^{-1}(\alpha) \in U'$.

We have that $U'$ is $\ka$-indecomposable in $\HOD[U]$.
Because $\HOD[U]$ is a model of $\ZFC$,  $\HOD[U]$ has an inner model of a measurable cardinal
by Proposition \ref{5.1}.

Case 2: $\ka$ is singular.
Now suppose to the contrary that
there is no inner model of a measurable cardinal.
In this case, we use Dodd-Jensen core model $K^{\HOD[U]}$ of $\HOD[U]$,
which is a forcing invariant definable transitive model of $\ZFC$.
By the assumption that no inner model of a measurable cardinal,
$K^{\HOD[U]}$ satisfies the covering theorem for $\HOD[U]$,
that is, for every set $x \in \HOD[U]$ of ordinals,
there is $y \in K^{\HOD[U]}$ such that
$x \subseteq y$ and $\size{y}^{\HOD[U]} \le \max(\aleph_1^{\HOD[U]}, \size{x}^{\HOD[U]})$.
For details of Dodd-Jensen core model, see Jech \cite{J1}.

Again, we see that $U'$ is $\ka$-indecomposable in $\HOD[U]$.
Take a function $f \in \HOD[U]$ from $\la$ to $\ka$.
By the assumption,
there is $X \in [\ka]^{<\ka}$ such that
$f^{-1}(X) \in U$. 
By Vop\v enka's theorem,
we can find a poset $\bbP \in \HOD[U]$ and
a $(\HOD[U], \bbP)$-generic $G \in V$ with
$X \in \HOD[U][G]$.
Since $\HOD[U][G]$ does not have an inner model of a measurable cardinal,
$K^{\HOD[U][G]}$ satisfies the covering theorem for $\HOD[U][G]$.
So there is $Y \in K^{\HOD[U][G]}$
such that $X \subseteq Y \subseteq \ka$ and
$\size{Y}^{\HOD[U][G]} \le \max(\aleph_1^{\HOD[U][G]}, \size{X}^{\HOD[U][G]})$.
We know $K^{\HOD[U]}=K^{\HOD[U][G]}$, so $Y \in K^{\HOD[U]} \subseteq \HOD[U]$.
Clearly $f^{-1}(Y) \in U$, and we have to check $\size{Y}^{\HOD[U]}<\ka$.
Since $\HOD[U][G]$ satisfies $\AC$, we have that
$\ka>\aleph_1^{\HOD[U][G]}$.
Thus we have $\size{Y}<\ka$ in $\HOD[U]$.

Now we know that $U'$ is a $\ka$-indecomposable uniform ultrafilter over $\la$ in $\HOD[U]$.
Since $\HOD[U]$ is a model of $\ZFC$, we have that 
there is an inner model of a measurable cardinal 
by Proposition \ref{5.1}. This is a contradiction.
\end{proof}


\begin{thebibliography}{100}
\bibitem{A}
A.~Apter, \emph{How many normal measures can $\aleph_{\om+1}$ carry?},
Fundamenta Mathematicae 191, 57--66 (2006). 
\bibitem{AM}
A.~Apter, M.~Magidor, \emph{Instances of dependent choice and the measurability of
$\aleph_{\om+1}$}, Ann. Pure Appl. Logic 74 (1995), 203219.
\bibitem{D} H.-D.~Donder, 
\emph{Regularity of ultrafilters and the core model.}
Israel J. Math.63(1988), no.3, 289--322.
\bibitem{HK} Y.~Hayut, A.~Karagila, 
\emph{Spectra of uniformity},
Comment. Math. Univ. Carolin. 60 (2019), no.2, 285--298.
\bibitem{J} T.~Jech, \emph{The axiom of choice.}
Stud. Logic Found. Math., Vol. 75,
North-Holland, 1973.
\bibitem{J1} T.~Jech, 
{\it Set theory. The third millennium edition, revised and expanded}, 
Springer-Verlag, 2003.
\bibitem{WDR} W.~H.~Woodin, J.~Davis, D.~Rodriguez, \emph{The HOD dichotomy.}
Available at \url{https://arxiv.org/abs/1605.00613} (unpublished).
\end{thebibliography}
\end{document}